\newcommand{\p}{\prime}
\newcommand{\n}{\mathbf{n}}
\newcommand{\la}{\lambda}
\begin{document}

\newtheorem{lemma}{Lemma}[section]
\newtheorem{proposition}[lemma]{Proposition}
\newtheorem{theorem}[lemma]{Theorem}
\newtheorem{corollary}[lemma]{Corollary}
\newtheorem{definition}[lemma]{Definition}
\newtheorem{example}[lemma]{Example}
\newtheorem{remark}[lemma]{Remark}
\numberwithin{equation}{section}



\title{Lattice Path Enumeration and Its Applications in Representation Theory}
\date{}
\author {Jianqiang Feng  ~~Wenli Liu  ~~Ximei Bai ~~Zhenheng Li}

\vspace{ -8mm}
\maketitle

\begin{abstract}In this paper, we enumerate lattice paths with certain constraints and apply the corresponding results to develop formulas for calculating the dimensions of submodules of a class of modules for planar upper triangular rook monoids. In particular, we show that the famous Catalan numbers appear as the dimensions of some special modules; we also obtain some combinatorial identities.

\vspace{ 0.2cm} \noindent {\bf Keywords:} Lattice path, module, dimension, Catalan number, rook monoid, order preserving and order decreasing.

\vspace{ 0.2cm} \noindent {\bf 2010 AMS Subject Classification:} 05E10, 05E15, 05A15

\end{abstract}

\baselineskip 14pt
\parskip 1mm
\section{Introduction}

In this paper we find an application of monotonic lattice paths in the representation theory of a planar upper triangular rook monoid $\mathcal{IC}_n$ consisting of order preserving and order decreasing partial maps of $\n=\{1, ..., n\}$. The modules of interest are submodules of $V=\bigoplus_{k=0}^n V_k$, where $V_k$ is a vector space over a field $F$ of characteristic zero generated by a set of elements $v_S$ indexed by the $k$-subsets of $\n$, with the module structure under the action: for $f\in\mathcal{IC}_n$ and $S\subseteq\n$,
\begin{equation*}
f\cdot v_S=
\left\{
  \begin{array}{ll}
    v_{S^\p}, & \hbox{if\; $S\subseteq D(f)$} \\
    0, & \hbox{otherwise,}
  \end{array}
\right.
\end{equation*}
where $D(f)$ is the domain of $f$, $S^\p=f(S)$. We use monotonic lattice path enumeration to describe the dimensions of the submodules of $V$.

Lattice paths under certain constraints have been studied in combinatorics over a long period and many elegant results have emerged (see for example \cite{CCKPRW, Hu, CK}), with applications to problems in probability and statistics: the traditional gambler's ruin, the rank
order statistics for non-parametric testing \cite{KM, Na}, and distributional problems in random walks \cite{Mo}. Recently researchers find applications of lattice paths in commutative algebra; non-intersecting lattice paths are used to describe the Hilbert series of determinantal and Pfaffian rings \cite{HT, Mar}.

The monoids we treat are closely related to the theory of linear algebraic monoids; we are here dealing with a submonoid of the most familiar interesting case of the Renner monoids of reductive monoids   \cite[Scetion 8.5]{R1}. For more information on the Renner monoids, see \cite{P1, R1, R2, So1, LR03, LLC14}.

The organization of the paper is as follows.
In Section 2 we gather some necessary concepts and basic facts about lattice path enumeration and the planar upper triangular rook monoid.

In Section 3 for a given decreasing lattice path from the point $(0, \lambda_1)$ to the point $(k, 0)$ with the height sequence $\lambda_1\ge\cdots\ge\lambda_k$, we develop an iterative formula in Theorem \ref{abg} for enumerating decreasing lattice paths from $(0, \lambda_1)$ to $(k, 0)$ and below the given path. This formula is different from the existing ones for such enumeration, and to our knowledge  it seems new. As consequences, we obtain two combinatorial identities in Corollaries 3.4 and 3.5, of which the latter is related to the Catalan numbers.

In Section 4 we provide three formulas for computing the dimensions of submodules of $V$ via lattice path enumeration.
The first formula stated in Theorem \ref{dim} calculates the dimensions of the submodules $\langle v_S\rangle$ of $V$ generated by a single basis vector $v_S$ where $S$ is a subset of $\n$; this formula is an application of Theorem \ref{abg}.
As a result of the application of \cite[Theorem 10.7.1]{CK}, the second formula for computing the dimension of the submodule $\langle v_S\rangle$ is given in Theorem \ref{dim2}. It turns out that the famous Catalan numbers appear as the dimensions of some submodules $\langle v_S\rangle$.
The third formula described in Theorem \ref{dimEvery} calculates the dimension of every submodule of $V$.
For developing these formulas we have investigated the properties of submodules of $V$, and in this paper we only show the properties needed for establishing our formulas. We find that the submodule $\langle v_S\rangle$ is a direct sum of the vector spaces $Fv_T$ where $T$ ranges over all subsets less that or equal to $S$. We then show that every submodule of $V$ is cyclic and contains a unique reduced generator. Furthermore we conclude that two submodules equal if and only if they have the same reduced generator.

{\bf Acknowledgement} {We would like to thank Dr. M. Can for useful email communications and Dr. R. Koo for valuable comments.}

\section{Preliminaries}\label{Pre}

We gather necessary definitions and basic facts about lattice paths and planar upper triangular rook monoids.

\subsection{Lattice Paths}

A lattice path $\boldsymbol v$ is a sequence of finite lattice points $v_1, \ldots, v_k$ of $\mathbb Z^2$, with $v_1$ the starting point and $v_k$ the ending point of the path. The vectors $\overrightarrow{v_1 v_2}, \overrightarrow{v_2 v_3},  \ldots, \overrightarrow{v_{l-1} v_k}$ are referred to as the steps of the path.
A lattice path is decreasing if each step is either $(1, 0)$ or $(0, -1)$, while a lattice path is increasing if each step is either $(1, 0)$ or $(0, 1)$. A lattice path is \emph{monotonic} if it is decreasing or increasing. The lattice path from $(0, 4)$ to $(5, 0)$ in the left figure below is decreasing, and the one from $(0, 0)$ to $(5, 4)$ in the right is increasing.

\newpage
\begin{center}
    \begin{picture}(90, 30)(110, 0)
    \setlength{\unitlength}{1.2mm}

    \put(0, 5){\line(1, 0){40}}
    \put(5, 0){\line(0, 1){30}}

    \put(5, 5){\circle*{1}}
    \put(5, 10){\circle*{1}}
    \put(5, 15){\circle*{1}}
    \put(5, 20){\circle*{1}}
    \put(5, 25){\circle*{1.5}}

    \put(10, 5){\circle*{1}}
    \put(10, 10){\circle*{1}}
    \put(10, 15){\circle*{1}}
    \put(10, 20){\circle*{1.5}}
    \put(10, 25){\circle*{1.5}}

    \put(15, 5){\circle*{1}}
    \put(15, 10){\circle*{1}}
    \put(15, 15){\circle*{1}}
    \put(15, 20){\circle*{1.5}}
    \put(15, 25){\circle*{1}}

    \put(20, 5){\circle*{1}}
    \put(20, 10){\circle*{1.5}}
    \put(20, 15){\circle*{1.5}}
    \put(20, 20){\circle*{1.5}}
    \put(20, 25){\circle*{1}}

    \put(25, 5){\circle*{1}}
    \put(25, 10){\circle*{1.5}}
    \put(25, 15){\circle*{1}}
    \put(25, 20){\circle*{1}}
    \put(25, 25){\circle*{1}}

    \put(30, 5){\circle*{1.5}}
    \put(30, 10){\circle*{1.5}}
    \put(30, 15){\circle*{1}}
    \put(30, 20){\circle*{1}}
    \put(30, 25){\circle*{1}}

    \put(5, 25){\line(1, 0){5}}
    \put(10,25){\line(0, -1){5}}
    \put(10,20){\line(1, 0){10}}
    \put(20,20){\line(0, -1){10}}
    \put(20,10){\line(1, 0){10}}
    \put(30,10){\line(0, -1){5}}

    \put(-4.5, 25){$(0, 4)$}

    \put(6.5, 21.5){$4$}
    \put(12, 16.5){$3$}
    \put(16.5, 16.5){$3$}
    \put(26.6, 6.5){$1$}
    \put(21.5, 6.5){$1$}

    \put(28, 1){$(5, 0)$}


    \put(60, 5){\line(1, 0){40}}
    \put(65, 0){\line(0, 1){30}}

    \put(65, 5){\circle*{1.5}}
    \put(65, 10){\circle*{1.5}}
    \put(65, 15){\circle*{1}}
    \put(65, 20){\circle*{1}}
    \put(65, 25){\circle*{1}}

    \put(70, 5){\circle*{1}}
    \put(70, 10){\circle*{1.5}}
    \put(70, 15){\circle*{1.5}}
    \put(70, 20){\circle*{1.5}}
    \put(70, 25){\circle*{1}}

    \put(75, 5){\circle*{1}}
    \put(75, 10){\circle*{1}}
    \put(75, 15){\circle*{1}}
    \put(75, 20){\circle*{1.5}}
    \put(75, 25){\circle*{1}}

    \put(80, 5){\circle*{1}}
    \put(80, 10){\circle*{1}}
    \put(80, 15){\circle*{1}}
    \put(80, 20){\circle*{1.5}}
    \put(80, 25){\circle*{1.5}}

    \put(85, 5){\circle*{1}}
    \put(85, 10){\circle*{1}}
    \put(85, 15){\circle*{1}}
    \put(85, 20){\circle*{1}}
    \put(85, 25){\circle*{1.5}}

    \put(90, 5){\circle*{1}}
    \put(90, 10){\circle*{1}}
    \put(90, 15){\circle*{1}}
    \put(90, 20){\circle*{1}}
    \put(90, 25){\circle*{1.5}}

    \put(65, 10){\line(1, 0){5}}
    \put(70,20){\line(0, -1){10}}
    \put(70,20){\line(1, 0){10}}
    \put(80,20){\line(0, 1){5}}
    \put(80,25){\line(1, 0){10}}
    \put(65,10){\line(0, -1){5}}

    \put(57, 1){$(0, 0)$}

    \put(66.5, 6.5){$1$}
    \put(71.5, 16.5){$3$}
    \put(76.5, 16.5){$3$}
    \put(81.5, 21.5){$4$}
    \put(86.5, 21.5){$4$}

    \put(89, 27){$(5, 4)$}
    \end{picture}
\end{center}

From now on we assume that all the lattice paths under consideration are monotonic and lie in the closed first quadrant, the union of the first quadrant, the nonnegative $x$-axis, and the nonnegative $y$-axis. We further assume that the starting points of the paths are all on the nonnegative $y$-axis.

The heights $\lambda_i$ of all the horizontal steps of a monotonic lattice path form a finite sequence $(\lambda_1, \ldots,  \lambda_k)$ of nonnegative integers, which is uniquely determined by the lattice path, and is called the \emph{height sequence} of the path. If the path is decreasing (resp. increasing)  its height sequence is denoted by $\lambda_1 \ge \ldots \ge  \lambda_k$ (resp.   $\lambda_1 \le \ldots \le  \lambda_k$).
The height sequence of the path in the left figure above is $4\ge 3\ge 3\ge 1\ge 1$, and that of the path in the right is $1\le 3\le 3\le 4\le 4$.

%
%
Note that two different lattice paths may have the same height sequence. But for a given finite decreasing sequence $\lambda_1 \ge \ldots \ge  \lambda_k$ of nonnegative integers, there exists a unique decreasing lattice path in the closed first quadrant from the point $(0, \lambda_1)$ to the point $(k, 0)$, whose height sequence is the given sequence.
Similarly, for a given finite increasing sequence $a_1 \le \ldots \le  a_k$ of nonnegative integers there exists a unique increasing lattice path in the closed first quadrant from the point $(0, 0)$ to the point $(k, a_k)$, whose height sequence is the given sequence.

It is convenient to identify a decreasing lattice path from the point $(0, \lambda_1)$ to the point $(k, 0)$ with its height sequence $\lambda_1 \ge \ldots \ge  \lambda_k$, and identify an increasing lattice path from the point $(0, 0)$ to the point $(k, a_k)$ with its height sequence $a_1 \le \ldots \le a_k$, and we often do so without mentioning it further.
We refer the reader to \cite{Hu, CK} and the references cited there for a comprehensive survey of lattice path enumeration.

\subsection{Planar Upper Triangular Rook Monoids}

An {\em injective partial map} $f$ of $\mathbf{n}$ is a one-to-one map of a subset $D(f)$ of $\mathbf{n}$ onto a subset $R(f)$ of $\mathbf{n}$ where $D(f)$ is the domain of $f$ and $R(f)$ is the range of $f$. We agree that there is a map with empty domain and range and call it 0 map.

We can represent an injective partial map by an $n\times n$ matrix, where the entry in the
$i$th row and the $j$th column is 1 if the map takes $j$ to $i$, and is 0 otherwise, such a matrix is named a rook matrix, a matrix with at most one 1 in each row and each column.
For example, the map $\sigma$ given below is an injective partial map of $\n=\{1, 2, 3, 4\}$,

{
\small
\begin{eqnarray*}
\sigma&=& \left(
            \begin{array}{cccc}
              1 & 2 & 3 & 4 \\
              1 & \times & 2 & 3 \\
            \end{array}
          \right)
 \\
   &=& \left(
         \begin{array}{cccc}
           1 & 0 & 0 & 0 \\
           0 & 0 & 1 & 0 \\
           0 & 0 & 0 & 1 \\
           0 & 0 & 0 & 0
         \end{array}
       \right)~.
\end{eqnarray*}
}

The {\em rook monoid} $R_n$ is the monoid of
injective partial maps from $\mathbf{n}$ to $\mathbf{n}$, whose operation is the composition of partial maps and the identity element is the identity map of $\n$. Since elements of $R_n$ are not necessarily invertible, $R_n$ is not a group. The map with empty domain and empty range behaves as a zero element. Identifying an injective partial map with its associated rook matrix, $R_n$ can be regarded as the monoid consisting of all the rook matrices of size $n$. The structures and representations of the rook monoid are intensively studied \cite{CP, GM3, Ho, M2, So2}; the generating functions of $R_n$ and their connections to Laguerre polynomials are found in \cite{BRR}.

We can write an injective partial map $f$ of $\mathbf{n}$ in
2-line notation by writing the numbers $s_1,\dots, s_k$ in the
top line if $D(f)=\{s_1,\dots, s_k\}$, and then below each
number we write its image.

An injective partial map from $\mathbf{n}$ to $\mathbf{n}$ is {\em order preserving} if whenever $a<b$ in the domain of the map, then $f(a)<f(b)$. It is easily seen that an injective partial map $f$ is order preserving if and only if the graph obtained from the 2-line notation of $f$ by joining all defined $f(a)$ in the range of the map to $a$ is a planar graph, which justifies the name in the following definition.
\begin{definition}
    The {\em planar rook monoid} is the monoid of {\em order preserving} injective partial maps from $\mathbf{n}$ to $\mathbf{n}$.
\end{definition}
Obviously, a planar rook monoid is a submonoid of $R_n$. The structure and representation of the planar rook monoid is studied in \cite{FHH}.

An injective partial map is called {\em order decreasing} if for all $a$ in the domain of the map, we have $f(a)\leq a$. Clearly, an injective partial map is order decreasing if and only if its matrix form is an upper triangular rook matrix, which motivates the name in the following definition.

\begin{definition}
The {\em planar upper triangular  rook monoid}, denoted by $\mathcal{IC}_n$, is the monoid of order preserving, order decreasing injective partial maps from $\mathbf{n}$ to $\mathbf{n}$.
\end{definition}
The notation $\mathcal{IC}_n$ for the planar upper triangular rook monoid is standard in semigroup theory, see for example \cite[Chapter 14]{GM3}.

\section{Lattice path enumeration}\label{LPE}
Let $\boldsymbol v$ be a decreasing lattice path in the closed first quadrant with the starting point $(0, \lambda_1)$, the ending point $(k, 0)$, and the height sequence $\lambda_1\ge\ldots\ge \lambda_k$. We say that a decreasing lattice path $\boldsymbol u$ with the height sequence $\mu_1\ge\ldots\ge\mu_l$ is \emph{below} $\boldsymbol v$ if it is from $(0, \lambda_1)$ to $(k, 0)$ (hence $l=k$) and  $0\le\mu_i\le\lambda_i$ for $i=1,\ldots,k.$ In particular, if $\boldsymbol u$ is below $\boldsymbol v$, then they share the same starting point and the same ending point.
The concept `below' for increasing lattice paths is defined similarly.

The purpose of this section is to calculate the number $d_k$ of all the decreasing lattice paths below $\boldsymbol v$.
The next lemma is immediate.
\begin{lemma}\label{dkseq}
    The number $d_k$ is equal to the number of decreasing sequences $\mu_1\ge\cdots\ge\mu_k$ of integers such that $0\le\mu_i\le\lambda_i, \,i=1, \ldots, k.$
\end{lemma}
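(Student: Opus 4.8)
The plan is to exhibit an explicit bijection between the set of decreasing lattice paths below $\boldsymbol v$ and the set of weakly decreasing integer sequences $(\mu_1, \ldots, \mu_k)$ satisfying $0 \le \mu_i \le \lambda_i$, and then to read off that the two sets have the same cardinality. Since $d_k$ is by definition the number of decreasing paths below $\boldsymbol v$, the lemma follows at once. The natural map in one direction sends each path $\boldsymbol u$ below $\boldsymbol v$ to its height sequence $\mu_1 \ge \cdots \ge \mu_k$; by the very definition of ``below'' this sequence satisfies $0 \le \mu_i \le \lambda_i$ for every $i$, so the map lands in the claimed target set.

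First I would check injectivity. Two decreasing paths from $(0, \lambda_1)$ to $(k, 0)$ with the same height sequence must coincide: the height sequence records, for each $i$, the $y$-coordinate at which the $i$th horizontal step occurs, and since every step of a decreasing path is either $(1,0)$ or $(0,-1)$, specifying the heights of all $k$ horizontal steps together with the fixed endpoints pins down the number of vertical steps between consecutive horizontal steps, hence the entire interleaving. This is precisely the uniqueness statement recorded in Section \ref{Pre}.

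Next I would establish surjectivity by explicit reconstruction. Given a sequence with $\lambda_1 \ge \mu_1 \ge \cdots \ge \mu_k \ge 0$ (the leftmost inequality being forced by the case $i=1$ of $\mu_i \le \lambda_i$), I build a path starting at $(0, \lambda_1)$: descend to $(0, \mu_1)$, step right to $(1, \mu_1)$, descend to $(1, \mu_2)$, step right to $(2, \mu_2)$, and so on, finishing with a descent from $(k, \mu_k)$ to $(k, 0)$. A telescoping count shows this uses exactly $k$ horizontal steps and $\lambda_1$ vertical steps, so it is a legitimate decreasing path from $(0, \lambda_1)$ to $(k, 0)$; its height sequence is the given one, and the constraints $\mu_i \le \lambda_i$ guarantee it lies below $\boldsymbol v$.

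The only point demanding care---and the nearest thing to an obstacle---is that the starting height $\lambda_1$ need not equal the first recorded height $\mu_1$. The uniqueness fact quoted from Section \ref{Pre} is phrased for a path whose starting height coincides with $\mu_1$, whereas here the path may begin with several vertical steps before its first horizontal step. I would therefore note that prepending $\lambda_1 - \mu_1$ downward steps to the unique path from $(0, \mu_1)$ to $(k, 0)$ with height sequence $(\mu_1, \ldots, \mu_k)$ produces the unique path from $(0, \lambda_1)$ to $(k, 0)$ realizing that height sequence. Once this is observed the correspondence is a genuine bijection, and Lemma \ref{dkseq} is immediate.
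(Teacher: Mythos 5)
Your proof is correct and matches the paper's (implicit) reasoning: the paper offers no argument at all, declaring the lemma immediate on the strength of the identification between decreasing paths and their height sequences set up in Section~\ref{Pre}, and your bijection is exactly that identification spelled out. The subtlety you flag---that a path below $\boldsymbol v$ may begin with a descent from $(0,\lambda_1)$ to $(0,\mu_1)$ before its first horizontal step, so the quoted uniqueness statement needs the prepending observation---is handled correctly and is the only content the paper leaves tacit.
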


We give a formula for calculating $d_k$ in Theorem \ref{abg}.
An example is useful to illustrate the idea of its proof. Let $k=2$ and $\lambda_1\ge\lambda_2$ be $4\ge 2$. We first fix $\mu_2 = 2~ (= \lambda_2)$; there are 3 sequences $\mu_1\ge\mu_2$ with $\mu_1\le\lambda_1: 4\ge 2;~ 3\ge 2; ~2\ge 2$. We then fix $\mu_2 = 1$; there are 4 such sequences: $4\ge 1; ~3\ge 1;  ~2\ge 1; ~1\ge 1$. We now fix $\mu_2 = 0$; there are 5 such sequences $: 4\ge 0; ~3\ge 0; ~2\ge 0; ~1\ge 0; ~0\ge 0$. Lemma \ref{dkseq} indicates $d_2=3+4+5 =12.$

From now on, we agree that if $a>b$, the empty sum $\sum_{i=a}^b \square_i = 0$.

\begin{theorem}\label{abg}
Let $\boldsymbol v$ be a given decreasing lattice path in the closed first quadrant with the starting point $(0, \lambda_1)$, the ending point $(k, 0)$, and the height sequence $\lambda_1 \ge \ldots \ge \lambda_k$ of nonnegative integers. Then the number $d_k$ of decreasing lattice paths below $\boldsymbol v$ is given by $d_1= \lambda_1+1$, and for $k\ge 2$ by

{\small
\[
d_k = \sum^{k-1}_{i=1}\bigg[{\la_i+k-i+1 \choose k+1-i} -{\la_i-\la_k+k-i \choose k+1-i}\bigg]\gamma_i -\sum^{k-2}_{i=1}(\la_k+1) {\la_i-\la_{k-1}+k-i-1 \choose k-i}\gamma_i~,
\]
}
where
\begin{eqnarray}\label{gamma}
\gamma_1 = 1 \quad\text{and}\quad\gamma_j =-\sum^{j-2}_{i=1}
          {\la_i-\la_{j-1}+j-i-1 \choose j-i}\gamma_i~\quad\text{for } j\ge 2.
\end{eqnarray}
\end{theorem}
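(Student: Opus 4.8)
The plan is to prove a refined statement by induction and then specialize. For $1\le j\le k$ and an integer $t\ge 0$, let $f_j(t)$ denote the number of decreasing integer sequences $\mu_1\ge\cdots\ge\mu_j$ with $0\le\mu_i\le\lambda_i$ for $1\le i\le j$ and with the extra constraint $\mu_j\ge t$. By Lemma \ref{dkseq} the quantity we want is $d_k=f_k(0)$, and directly from the definition $f_1(t)=\lambda_1-t+1=\binom{\lambda_1-t+1}{1}$ for $0\le t\le\lambda_1$. The engine of the argument is the recursion
\[
f_j(t)=\sum_{s=t}^{\lambda_j} f_{j-1}(s),\qquad 0\le t\le\lambda_j,
\]
obtained by conditioning on the value $s=\mu_j$: once $\mu_j=s$ is fixed (which forces $0\le s\le\lambda_j$), the remaining entries must satisfy $\mu_1\ge\cdots\ge\mu_{j-1}\ge s$ together with $\mu_i\le\lambda_i$, and these are counted by $f_{j-1}(s)$. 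Since $\lambda_j\le\lambda_{j-1}$, every summation index $s$ stays within the range where the closed form for $f_{j-1}$ is valid, which is what makes the induction go through.

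The main step is to prove, by induction on $j$, the closed form
\[
f_j(t)=\sum_{i=1}^{j}\binom{\lambda_i-t+j-i+1}{j-i+1}\gamma_i\;-\;\sum_{i=1}^{j-1}\binom{\lambda_i-\lambda_j+j-i}{j-i+1}\gamma_i,
\]
with $\gamma_i$ given by (\ref{gamma}). The base case $j=1$ is the formula above with $\gamma_1=1$ and an empty second sum. For the inductive step I would insert the formula for $f_{j-1}(s)$ into the recursion and evaluate the inner sum $\sum_{s=t}^{\lambda_j}\binom{\lambda_i-s+j-i}{j-i}$ by the hockey-stick identity $\sum_{r=a}^{b}\binom{r}{m}=\binom{b+1}{m+1}-\binom{a}{m+1}$, which yields $\binom{\lambda_i-t+j-i+1}{j-i+1}-\binom{\lambda_i-\lambda_j+j-i}{j-i+1}$. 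The first term feeds the $t$-dependent part of the next level, while the constant second sum carried over from level $j-1$, when summed against $\sum_{s=t}^{\lambda_j}1=\lambda_j-t+1$, contributes exactly the $i=j$ summand $\binom{\lambda_j-t+1}{1}\gamma_j$ — provided $\gamma_j$ equals that accumulated constant. Verifying this is the crux: the second sum in the formula for $f_{j-1}$ is $-\sum_{i=1}^{j-2}\binom{\lambda_i-\lambda_{j-1}+j-i-1}{j-i}\gamma_i$, which is precisely the right-hand side of the recursion (\ref{gamma}) defining $\gamma_j$. I expect the index bookkeeping here — matching $j-i+1=j+1-i$ and tracking which $\lambda$ appears at each stage — to be the main obstacle, rather than any deep difficulty.

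Finally I would specialize the closed form at $t=0$ and $j=k$, obtaining
\[
d_k=\sum_{i=1}^{k}\binom{\lambda_i+k-i+1}{k-i+1}\gamma_i-\sum_{i=1}^{k-1}\binom{\lambda_i-\lambda_k+k-i}{k-i+1}\gamma_i.
\]
Splitting off the $i=k$ term of the first sum, which equals $(\lambda_k+1)\gamma_k$, and then substituting $\gamma_k=-\sum_{i=1}^{k-2}\binom{\lambda_i-\lambda_{k-1}+k-i-1}{k-i}\gamma_i$ converts $(\lambda_k+1)\gamma_k$ into exactly the negative second sum appearing in the theorem, so that the remaining pieces assemble into the asserted expression for $d_k$; the formula $d_1=\lambda_1+1$ is immediate from $f_1(0)$. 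Throughout I would use the conventions that binomial coefficients with out-of-range parameters vanish and that empty sums are $0$ (as already stipulated in the text), so the formula remains correct for small $k$ and for nearly equal $\lambda_i$.
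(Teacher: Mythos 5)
Your proposal is correct and takes essentially the same route as the paper's own proof: your $f_j(t)$ is, up to an index shift, the paper's quantity $\alpha_{j+1}(t)$ (the paper writes it via the substitution $\xi_j=\la_j-\mu$ and splits the closed form as $\beta_j+\gamma_j$ rather than as your single two-sum expression), and both arguments run the same induction on $j$ using the same conditioning on the last entry, the same hockey-stick identity, and the same identification of the accumulated constant with $\gamma_j$ from (\ref{gamma}). The only cosmetic difference is that you fold the paper's final summation $d_k=\sum_{\mu=0}^{\la_k}\alpha_k(\mu)$ into the last inductive step and recover the theorem's formula by splitting off the $i=k$ term $(\la_k+1)\gamma_k$, which is exactly equivalent.
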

\begin{proof}
To find $d_k$, by Lemma \ref{dkseq} it suffices to compute the number of the sequences $\mu_1\geq \dots\geq \mu_k$ of nonnegative integers with $\mu_i\leq \lambda_i$ for $i=1,\dots,k.$
If $k=1$, clearly $d_1 = \lambda_1 + 1$.

If $k\ge 2$, let $2\le j\le k$. For each fixed nonnegative integer $\mu\le\la_j$,
denote by $\alpha_j(\mu)$ the number of sequences of nonnegative integers
\begin{equation}\label{aSj}
  \mu_1\geq \dots\geq \mu_{j-1}\ge\mu\quad\text{with}\quad \mu_i\leq \lambda_i\quad\text{for}\quad i=1,\dots,j-1~.
\end{equation}
We calculate $\alpha_j(\mu)$ iteratively on $j$,
and the required number $d_k = \sum_{\mu=0}^{\lambda_k}\alpha_k(\mu)$.

Let $\xi_j=\la_j-\mu$. Then $0\leq \xi_j\leq\la_j$. Our aim now is to prove
\begin{equation}\label{alphaj}
  \alpha_j(\mu) = \beta_j + \gamma_j~,
\end{equation}
where
$\beta_j=\sum^{j-1}_{i=1}{\la_i-\la_j+\xi_j+j-i \choose j-i}\gamma_i$ and $\gamma_j = -\sum^{j-2}_{i=1}{\la_i-\la_{j-1}+j-i-1 \choose j-i}\gamma_i$ with $\gamma_1=1$.
Notice that $\alpha_j(\mu)$ is a sum of two numbers $\beta_j$ and $\gamma_j$, of which $\gamma_j$ depends on $\la_1,\dots,\la_{j-1}$, whereas $\beta_j$ depends on $\la_1,\dots,\la_j$ and $\xi_j$.

We use induction on $j$ to prove (\ref{alphaj}) for $2 \le j\le k$. If $j=2$, for each fixed nonnegative integer $\mu\le\lambda_2$
we have $\xi_2=\la_2-\mu$ and $0\leq \xi_2\leq\la_2$. Let $\xi_1=\la_1-\mu_1$. To ensure that (\ref{aSj}) holds for this case, namely $\mu_1\geq\mu$ and $\mu_1\leq\lambda_1$, we must have $0\leq \xi_1\leq\la_1-\la_2+\xi_2$, and conversely. So
\begin{equation*}
    \alpha_2(\mu)= \la_1-\la_2+\xi_2+1 = \beta_2+\gamma_2
\end{equation*}
where $\beta_2=\la_1-\la_2+\xi_2+1$ and $\gamma_2=0$, and this is (\ref{alphaj}) for $j=2$.

Suppose (\ref{alphaj}) holds for $j=l$ with $2\leq l\leq k-1$, that is, for each fixed nonnegative integer $\mu\le\lambda_l$
we have $\xi_l=\la_l-\mu$ with $0\leq \xi_l\leq\la_l$, and the number of sequences $\mu_1\geq\dots\geq\mu_{l-1}\geq\mu$ with $\mu_i\leq\lambda_i$ for $i=1,\ldots,l-1$ is
\begin{equation}\label{hypothesis}
  \alpha_l(\mu) = \beta_l + \gamma_l~,
\end{equation}
where
$\beta_l=\sum^{l-1}_{i=1}{\la_i-\la_l+\xi_l+l-i \choose l-i}\gamma_i$ and $\gamma_l = -\sum^{l-2}_{i=1}{\la_i-\la_{l-1}+l-i-1 \choose l-i}\gamma_i$~.

We now prove (\ref{alphaj}) for $j=l+1$. For a fixed nonnegative integer $\nu\le\lambda_{l+1}$ we have $\xi_{l+1} = \la_{l+1}-\nu$ with $0\leq \xi_{l+1}\leq\la_{l+1}$. Let $\mu=\la_l-\xi_l$. To ensure that the condition (\ref{aSj})
\[
     \mu_1\geq\dots\geq\mu_{l-1}\geq\mu\ge\nu\quad\text{ with}\quad\mu_i\le\lambda_i,\, i=1,\ldots,{l-1}\text{ and } \mu\le\lambda_l
\]
holds here, we must have $0\leq \xi_l\leq\rho_l$ where $\rho_l=\la_l-\la_{l+1}+\xi_{l+1}$, and conversely. Adding all $\alpha_l(\mu)$ up for $\nu\le\mu\le\lambda_l$ and using the induction hypothesis (\ref{hypothesis}), we obtain
\begin{eqnarray}
\nonumber\alpha_{l+1}(\nu)&=&\sum_{\mu = \nu}^{\lambda_{l}} \alpha_l(\mu)  = \sum_{\mu = \nu}^{\lambda_{l}} (\beta_l + \gamma_l)\\
          &=&\sum^{\rho_l}_{\xi_l=0}\sum^{l-1}_{i=1}{\la_i-\la_l+\xi_l+l-i \choose l-i}\gamma_i+\sum^{\rho_l}_{\xi_l=0}\gamma_l \label{second}\\
\nonumber &=&\sum^{l-1}_{i=1}\left\{{\la_i-\la_{l+1}+\xi_{l+1}+(l+1)-i \choose l+1-i}\gamma_i
    -{\la_i-\la_l+l-i \choose l+1-i}\gamma_i\right\}\\
          &&\qquad\qquad\qquad\qquad\qquad+{\la_l-\la_{l+1}+\xi_{l+1}+1\choose 1}\gamma_l \label{alp1}\\
\nonumber &=&\sum^l_{i=1}{\la_i-\la_{l+1}+\xi_{l+1}+(l+1)-i \choose l+1-i}\gamma_i-\sum^{l-1}_{i=1}{\la_i-\la_l+l-i \choose l+1-i}\gamma_i\\
\nonumber &=&\beta_{l+1} + \gamma_{l+1},
\end{eqnarray}
where
\begin{eqnarray*}\label{}
\beta_{l+1}&=&\sum^l_{i=1}
    {\la_i-\la_{l+1}+\xi_{l+1}+(l+1)-i \choose l+1-i}\gamma_i ~,\\
\gamma_{l+1}&=&-\sum^{l-1}_{i=1}{\la_i-\la_l+l-i \choose
l+1-i}\gamma_i ~.
\end{eqnarray*}
Here we have made use of the identity $\sum_{z=a}^{a+b-1}\binom{z}{p} = \binom{a+b}{p+1} - \binom{a}{p+1}$ in which $a, b, p$ are natural numbers to obtain (\ref{alp1}) from (\ref{second}) by assigning $a=\lambda_i - \lambda_l + l - i,\, b=\lambda_l - \lambda_{l+1} + \xi_{l+1} + 1$ and $p=l-i\ge 1$.
Therefore, (\ref{alphaj}) is valid for $j=l+1$, and we complete the proof of (\ref{alphaj}) by induction.

We are now able to calculate the number $d_k$ for $k\ge 2$ by summing all $\alpha_k(\mu)$ in (\ref{alphaj}) up where $\mu$ runs from $0$ to $\la_k$, yielding
\begin{eqnarray*}\label{dimlambda3}
\nonumber d_k &=& \sum^{\la_k}_{\mu=0}\alpha_k(\mu) \\
\nonumber   &=&\sum^{\la_k}_{\xi_k=0}\sum^{k-1}_{i=1}
                {\la_i-\la_k+\xi_k+k-i \choose k-i}\gamma_i-\sum^{\la_k}_{\xi_k=0}\sum^{k-2}_{i=1}
                {\la_i-\la_{k-1}+k-i-1 \choose k-i}\gamma_i\\
\nonumber   &=&\sum^{k-1}_{i=1}{\la_i+k-i+1 \choose k+1-i}\gamma_i -\sum^{k-1}_{i=1}{\la_i-\la_k+k-i \choose k+1-i}\gamma_i \\
            &&\qquad\qquad -\sum^{k-2}_{i=1}(\la_k+1) {\la_i-\la_{k-1}+k-i-1 \choose k-i}\gamma_i~,
\end{eqnarray*}
which is the desired result.
\end{proof}

The next result is well-known, and is a special case of \cite[Theorem 10.7.1]{CK} enumerating the number of increasing lattice paths below a given increasing lattice path. Note that \cite[Theorem 10.7.1]{CK} was initially obtained in \cite{Kr} using recurrence relations. Recall that for a given increasing lattice path $\boldsymbol v$ in the closed first quadrant from $(0, 0)$ to $(k, a_k)$ with the height sequence $a_1 \le \ldots \le a_k$, we say that an increasing lattice path $\boldsymbol u$ with the height sequence $b_1\le\ldots\le b_l$ is \emph{below} $\boldsymbol v$ if the two paths share the same starting point $(0, 0)$ and the same ending point $(k, a_k)$ (hence $l=k$) and $0\le b_i\le a_i$ for $i=1,\ldots,k.$

\begin{proposition}\label{ck}
Let $\boldsymbol v$ be a given increasing lattice path in the closed first quadrant from $(0, 0)$ to $(k, a_k)$ with the height sequence $a_1 \le \ldots \le a_k$. Then the number $l_k$ of increasing lattice paths below $\boldsymbol v$ is
   \[
        l_k = \det_{1\le i, j \le k} \bigg(\binom{a_{i} + 1}{j-i+1}\bigg)~.
   \]
\end{proposition}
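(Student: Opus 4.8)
The plan is to prove this via the Lindström--Gessel--Viennot (LGV) lemma, after first reducing the count to one over bounded monotone sequences. Exactly as in Lemma \ref{dkseq}, the number $l_k$ equals the number of weakly increasing integer sequences $0\le b_1\le\cdots\le b_k$ with $b_i\le a_i$ for $i=1,\ldots,k$; this is merely the increasing analogue of Lemma \ref{dkseq}. Applying the standard shift $c_i=b_i+(i-1)$ converts these into strictly increasing sequences $c_1<c_2<\cdots<c_k$ subject to $c_i\le a_i+i-1$, and strictly increasing sequences are precisely what is encoded by families of pairwise non-intersecting monotone lattice paths. So the goal becomes to realize the bounded sequences as non-intersecting path families and then read off their number as a determinant.

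Next I would choose sources and sinks so that the LGV transfer matrix has the required binomial entries. Placing the source $P_i=(i-1,-a_i-i)$ and the sink $Q_j=(j,-j)$, a monotone path from $P_i$ to $Q_j$ (unit right and up steps) uses $j-i+1$ right steps out of $a_i+1$ steps in total, so the number of such paths is exactly $\binom{a_i+1}{j-i+1}$; in particular this is $0$ when $j<i-1$, matching the fact that no monotone path exists in that case. Each diagonal path $P_i\to Q_i$ uses a single right step at a height determined by $b_i$, and the shift above is calibrated so that two consecutive paths are non-intersecting precisely when $b_i\le b_{i+1}$. Thus the non-intersecting families from $\{P_i\}$ to $\{Q_i\}$ are in bijection with the bounded weakly increasing sequences counted by $l_k$.

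Finally I would invoke the LGV lemma: the determinant $\det_{1\le i,j\le k}\binom{a_i+1}{j-i+1}$ equals the signed sum over permutations $\sigma$ of the numbers of path families $P_i\to Q_{\sigma(i)}$, and this collapses to the number of non-intersecting families precisely when the configuration is \emph{non-permutable}, i.e. only $\sigma=\mathrm{id}$ admits a non-intersecting family. The main obstacle is verifying this non-permutability, and it is here that the staircase hypothesis $a_1\le\cdots\le a_k$ is essential: for $i<j$ a monotone path $P_j\to Q_i$ exists only when $j=i+1$, and in that case every path $P_i\to Q_{i+1}$ must cross the line $x=i$ at a height lying inside the full vertical segment traced on $x=i$ by the path $P_{i+1}\to Q_i$, forcing an intersection. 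Hence every non-identity term cancels and the determinant equals $l_k$, as claimed.

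As an alternative, fully self-contained route one may argue by induction on $k$, matching the recurrence of Kreweras \cite{Kr}: a cofactor expansion of the determinant along the last row reproduces the recurrence satisfied by $l_k$, with base case $l_1=a_1+1$. One could also simply quote \cite[Theorem 10.7.1]{CK}, of which the present statement is the special case obtained by taking the bounding path to be the given $\boldsymbol v$.
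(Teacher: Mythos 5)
The paper itself contains no proof of Proposition \ref{ck}: it is presented as a known result, quoted as a special case of \cite[Theorem 10.7.1]{CK} and attributed to Kreweras \cite{Kr}, who obtained it via recurrence relations. So your closing one-line route (``simply quote \cite[Theorem 10.7.1]{CK}'') is exactly what the paper does, while your main argument --- the Lindstr\"om--Gessel--Viennot proof --- is a genuinely different, self-contained route. Its setup is correct: the reduction of $l_k$ to weakly increasing sequences $0\le b_1\le\cdots\le b_k$ with $b_i\le a_i$ is indeed the increasing analogue of Lemma \ref{dkseq}; with sources $P_i=(i-1,-a_i-i)$ and sinks $Q_j=(j,-j)$ the displacement is $(j-i+1,\,a_i+i-j)$, so the path count is $\binom{a_i+1}{j-i+1}$, vanishing exactly when no path exists; and since non-consecutive identity paths live on disjoint vertical lines while paths $i$ and $i+1$ can only meet on the line $x=i$, the identity-connected non-intersecting families biject with the bounded sequences, as you say. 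What your route buys is that the paper would become self-contained and the role of the hypothesis $a_1\le\cdots\le a_k$ becomes visible; what the citation buys is brevity. (Your secondary ``induction matching Kreweras's recurrence'' route is only a sketch --- you neither state the recurrence nor verify the cofactor expansion --- so I set it aside.)

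There is one step you state too narrowly: the non-permutability check. You argue that a contributing $\sigma\ne\mathrm{id}$ forces a pair of paths $P_{i+1}\to Q_i$ and $P_i\to Q_{i+1}$, and that these must cross. But a permutation with nonzero path count need only satisfy $\sigma(j)\ge j-1$ for all $j$, and such a $\sigma$ need not contain this adjacent-transposition pattern: for $k=3$, the cycle $\sigma(1)=3$, $\sigma(2)=1$, $\sigma(3)=2$ qualifies, yet no path $P_i\to Q_{i+1}$ occurs in its families. The repair uses your same geometric mechanism, applied to the right path. Since $\sigma\ne\mathrm{id}$, some $i$ has $\sigma(i+1)=i$, and the path $P_{i+1}\to Q_i$ is the full vertical segment on $x=i$ from height $-a_{i+1}-i-1$ to height $-i$. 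Since $\sigma$ is injective and sends $i+1$ to $i$, the $i+1$ indices $1,\dots,i+1$ cannot all map into $\{1,\dots,i\}$, so some $m\le i$ has $\sigma(m)\ge i+1$. The path $P_m\to Q_{\sigma(m)}$ starts left of $x=i$ and ends right of it, hence crosses $x=i$ at a height lying in $[-a_m-m,\,-\sigma(m)]$; monotonicity gives $-a_m-m\ge -a_{i+1}-i$ and $-\sigma(m)\le -i-1$, so the crossing point lies on that full vertical segment, forcing an intersection. With this adjustment (which is where $a_1\le\cdots\le a_k$ is genuinely used), your LGV proof is complete and correct.
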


Connecting Theorem \ref{abg} to Proposition \ref{ck}, we obtain a combinatorial identity.
\begin{corollary}\label{newId}
    For a sequence $\lambda_1\ge\cdots\ge\lambda_k$ of nonnegative integers, we have
    \begin{eqnarray*}
\nonumber \det_{1\le i, j \le k} \binom{\lambda_i + 1}{i-j+1}
            &=&\sum^{k-1}_{i=1}{\la_i+k-i+1 \choose k+1-i}\gamma_i -\sum^{k-1}_{i=1}{\la_i-\la_k+k-i \choose k+1-i}\gamma_i \\
            &&\qquad\qquad -\sum^{k-2}_{i=1}(\la_k+1) {\la_i-\la_{k-1}+k-i-1 \choose k-i}\gamma_i~,
\end{eqnarray*}
where $\gamma_i$ is given in (\ref{gamma}).
\end{corollary}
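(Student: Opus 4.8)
The plan is to prove this identity by showing that both sides count the same quantity, namely the number of monotonic lattice paths below a suitable given path. Theorem \ref{abg} already identifies the right-hand side of the asserted identity with the number $d_k$ of decreasing lattice paths below the path with height sequence $\lambda_1 \ge \cdots \ge \lambda_k$. It therefore suffices to recognize the left-hand determinant as the number $l_k$ produced by Proposition \ref{ck} for an appropriate increasing path, and then to exhibit a bijection forcing $d_k = l_k$.

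First I would pass from the decreasing data to increasing data by reversing the index. Setting $a_i = \lambda_{k-i+1}$ turns the decreasing sequence $\lambda_1 \ge \cdots \ge \lambda_k$ into the increasing sequence $a_1 \le \cdots \le a_k$, and the assignment $b_i = \mu_{k-i+1}$ sends a decreasing sequence $\mu_1 \ge \cdots \ge \mu_k$ with $0 \le \mu_i \le \lambda_i$ to an increasing sequence $b_1 \le \cdots \le b_k$ with $0 \le b_i \le a_i$, and conversely. By Lemma \ref{dkseq} the first family is counted by $d_k$, and by the definition of ``below'' the second is counted by the $l_k$ of Proposition \ref{ck}; since the reversal is an involutive bijection between the two families, $d_k = l_k = \det_{1 \le i, j \le k}\binom{a_i + 1}{j - i + 1}$.

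It then remains to match this determinant, written in terms of the reversed heights $a_i = \lambda_{k-i+1}$, with the determinant $\det_{1 \le i, j \le k}\binom{\lambda_i + 1}{i - j + 1}$ in the statement. Here I would reverse both the rows and the columns of the matrix $\bigl(\binom{a_i+1}{j-i+1}\bigr)$, that is, replace $i$ by $k-i+1$ and $j$ by $k-j+1$. A direct substitution (using $a_{k-i+1} = \lambda_i$) shows the $(i,j)$ entry becomes $\binom{\lambda_i + 1}{i - j + 1}$, exactly the entry of the target matrix. Each of the two reversals multiplies the determinant by the sign $(-1)^{k(k-1)/2}$ of the order-reversing permutation, so the two contributions cancel and the two determinants are equal.

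Putting the three equalities together—the right-hand side equals $d_k$ by Theorem \ref{abg}, $d_k$ equals $l_k$ by the reversal bijection, and $l_k$ equals the target determinant by the double reversal—yields the claimed identity. The only delicate point is the bookkeeping in the last step: one must verify that reversing rows and columns really transforms $\binom{a_i+1}{j-i+1}$ into $\binom{\lambda_i+1}{i-j+1}$ and that the accumulated sign is $+1$ because $k(k-1)$ is always even. Everything else is a straightforward application of the two cited results.
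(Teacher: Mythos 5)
Your proposal is correct and follows essentially the same route as the paper: reverse the sequence via $a_i=\lambda_{k-i+1}$, apply Proposition \ref{ck}, convert the resulting determinant by reversing rows and columns (the paper conjugates by the anti-diagonal permutation matrix $D$, which is exactly your two sign factors $(-1)^{k(k-1)/2}$ cancelling since $(\det D)^2=1$), and identify $l_k=d_k$ by the reversal bijection before invoking Theorem \ref{abg}. Your explicit bijection $b_i=\mu_{k-i+1}$ merely spells out what the paper calls ``by symmetry.''
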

\begin{proof}
    Let $a_i = \lambda_{k-i+1}$. Then $a_1 \le \cdots \le a_k$. It follows from Proposition \ref{ck} that the number $l_k$ of increasing lattice paths below $a_1 \le \cdots \le a_k$ is
   \[
        l_k = \det_{1\le i, j \le k} \bigg(\binom{\lambda_{k-i+1} + 1}{j-i+1}\bigg)~.
   \]

Let $D$ be the anti-diagonal matrix of size $k$ with $1$ for each entry on the anti-diagonal and all other entries 0. If $A=(a_{ij})$ is any matrix of size $k$, then $DAD = (a_{_{k-i+1,\, k-j+1}})$ and $\det A = \det DAD.$ Taking $A = \big(\binom{\lambda_{k-i+1} + 1}{j-i+1}\big)$, we find that
$
    D A D = \big(\binom{\lambda_i + 1}{i-j+1}\big).
$
It follows that
\[
    l_k = \det_{1\le i, j \le k} \bigg(\binom{\lambda_i + 1}{i-j+1}\bigg)~.
\]

By symmetry, this number equals the number $d_k$ of decreasing lattice paths below the lattice path from $(0, \lambda_1)$ to $(k, 0)$ with the height sequence $\lambda_1\ge\cdots\ge\lambda_k$.
Using Theorem \ref{abg}, we obtain the required result.
\end{proof}

We have the combinatorial identity below for the Catalan number 
$c_n=\frac{1}{n+1}\binom{2n}{n}$.
To our knowledge, the identity is new.
\begin{corollary}\label{comId}
If $k\geq 2$, then
\begin{equation*}
c_{k+1}=\sum^{k-1}_{i=1}{2(k-i+1) \choose k+1-i}\gamma_i
  -\sum^{k-1}_{i=1}{2(k-i) \choose k+1-i}\gamma_i-\sum^{k-2}_{i=1}2
          {2(k-i-1) \choose k-i}\gamma_i~,
\end{equation*}
where $\gamma_1=1$ and for $2\leq i\leq k$,
\begin{equation*}
\gamma_i =-\sum^{i-2}_{j=1}{2(i-j-1) \choose i-j}\gamma_j~.
\end{equation*}
\end{corollary}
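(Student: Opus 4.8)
The plan is to specialize the general formula of Theorem \ref{abg} to the staircase height sequence and then identify the resulting count with a Catalan number by a direct combinatorial argument. First I would set $\la_i = k-i+1$ for $i=1,\dots,k$, so that $\boldsymbol v$ is the decreasing path from $(0,k)$ to $(k,0)$ with height sequence $k\ge k-1\ge\cdots\ge 1$; in particular $\la_k=1$ and $\la_{k-1}=2$. With this choice the arguments of the binomial coefficients in Theorem \ref{abg} collapse: one checks that $\la_i+k-i+1 = 2(k-i+1)$, that $\la_i-\la_k+k-i = 2(k-i)$, that $\la_k+1 = 2$, and that $\la_i-\la_{k-1}+k-i-1 = 2(k-i-1)$, while the recursion (\ref{gamma}) becomes $\gamma_j = -\sum_{i=1}^{j-2}\binom{2(j-i-1)}{j-i}\gamma_i$. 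Substituting these into the expression for $d_k$ and splitting the first sum reproduces verbatim the right-hand side displayed in the corollary. Thus it remains only to show that, for this particular $\boldsymbol v$, the number $d_k$ equals $c_{k+1}$.

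For the remaining identification I would invoke Lemma \ref{dkseq}: $d_k$ is the number of integer sequences $\mu_1\ge\cdots\ge\mu_k$ with $0\le\mu_i\le k-i+1$. Reversing the order by $\nu_i=\mu_{k-i+1}$ turns these into the sequences $0\le\nu_1\le\cdots\le\nu_k$ satisfying $\nu_i\le i$. These are counted by $c_{k+1}$: prepending $\nu_0=0$ and reindexing identifies them bijectively with the sequences $0\le w_1\le\cdots\le w_{k+1}$ satisfying $w_i\le i-1$, which is one of the standard Catalan families of cardinality $c_{k+1}$. Combining this with the specialized formula yields the asserted identity. (Alternatively, one could reach $c_{k+1}$ through Proposition \ref{ck} applied to the increasing sequence $a_i=\la_{k-i+1}=i$, but the bijective route is cleaner.)

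The computations in the first step are routine binomial bookkeeping, so the only point that genuinely requires care is the combinatorial identification $d_k=c_{k+1}$; although the underlying Catalan count is classical, I would make the index shift explicit, as above, to be certain the subscript is $c_{k+1}$ rather than $c_k$. A small sanity check confirms the normalization: for $k=2$ the sequences $\mu_1\ge\mu_2$ with $\mu_1\le 2$ and $\mu_2\le 1$ number $3+2=5=c_3$, matching the corollary's right-hand side $\binom{4}{2}-\binom{2}{2}=5$.
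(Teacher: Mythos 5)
Your proposal is correct and follows essentially the same route as the paper's proof: specialize Theorem \ref{abg} to the staircase sequence $\la_i = k-i+1$ and identify the resulting count $d_k$ with the Catalan number $c_{k+1}$. The only difference is that the paper simply cites the Catalan count as well known, whereas you verify it by an explicit index-shift bijection (and check the binomial-coefficient simplifications), which is just a more detailed rendering of the same argument.
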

\begin{proof}
Let $(\la_1\geq\la_2\geq\cdots\geq\la_k)=(k \geq(k-1)\geq\cdots\geq 1)$. We find that the number of sequences $\mu_1\ge\mu_2\ge\ldots\ge\mu_k$ of nonnegative integers such that $\mu_i\leq\la_i$ for $1\leq i\leq k$ is the Catalan number $c_{k+1}$.
Simplifying the formula for $d_k$ in Theorem \ref{abg}, we complete the proof.
\end{proof}

\begin{corollary}
Let $(\la_1\geq\dots\geq\la_k\geq 0)$ be a given partition
of some nonnegative integer. Then the number of distinct Young diagrams, with each row having equal or fewer boxes than the row above, obtained from the Young diagram of $\la$ by removing zero or more boxes from the rows is $d_k$.
\end{corollary}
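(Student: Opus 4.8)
The plan is to reduce the statement directly to Lemma \ref{dkseq} by exhibiting a bijection between the sub-diagrams described here and the decreasing sequences counted there. First I would encode the Young diagram of $\lambda$ by its row lengths $(\lambda_1, \ldots, \lambda_k)$, with row $i$ consisting of $\lambda_i$ left-justified boxes. Given any sub-diagram obtained by removing boxes, I record for each row $i$ the number $\mu_i$ of boxes that remain. Because the boxes in a Young-diagram row form a contiguous block starting from the left, deleting boxes from row $i$ necessarily leaves the leftmost $\mu_i$ of them, so the sub-diagram is recovered uniquely from the integers $\mu_1, \ldots, \mu_k$.

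Next I would translate the two constraints in the statement into the hypotheses of Lemma \ref{dkseq}. Removing zero or more boxes from row $i$, but no more than the $\lambda_i$ present, forces $0 \le \mu_i \le \lambda_i$; the requirement that each row have no more boxes than the row above forces $\mu_1 \ge \mu_2 \ge \cdots \ge \mu_k$. Conversely, any integer sequence $\mu_1 \ge \cdots \ge \mu_k$ with $0 \le \mu_i \le \lambda_i$ produces an admissible sub-diagram: keeping the leftmost $\mu_i$ boxes in row $i$ yields a Young diagram (its row lengths are non-increasing) sitting inside $\lambda$ and obtained from it by deleting $\lambda_i - \mu_i \ge 0$ boxes per row. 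These two passages are mutually inverse, so the sub-diagrams are in bijection with the set enumerated in Lemma \ref{dkseq}, whose cardinality is $d_k$.

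The only point requiring care---and the only place the argument could go wrong---is to check that counting \emph{distinct} Young diagrams, viewed as abstract shapes, matches counting length-$k$ sequences with trailing zeros allowed. The shape of a sub-diagram is its list of nonzero row lengths, and since $k$ is fixed, padding such a shape back out with zeros to length $k$ recovers a unique admissible sequence; hence distinct shapes correspond to distinct sequences and no collapsing occurs. With this verified, the bijection of the preceding paragraph is complete, and the number of such Young diagrams equals the number of decreasing sequences $\mu_1 \ge \cdots \ge \mu_k$ with $0 \le \mu_i \le \lambda_i$, which by Lemma \ref{dkseq} is exactly $d_k$.
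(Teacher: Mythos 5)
Your proof is correct and is essentially the paper's own argument: the paper simply asserts that the sub-diagrams are "easily seen" to be in bijection with the sequences $\mu_1\geq\dots\geq\mu_k\geq 0$, $\mu_i\leq\lambda_i$, and then invokes Lemma \ref{dkseq} (together with Theorem \ref{abg} for the explicit formula), which is exactly the reduction you carry out, just with the bijection and the trailing-zeros point made explicit.
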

\begin{proof}
  It is easily seen that the number of the desired distinct Young diagrams obtained from the Young diagram of $\la$ by removing zero or more boxes from the rows is equal to the number of sequences $\mu_1\geq\dots\geq\mu_k\geq 0$ such that $\mu_i\leq \la_i$ for all $i=1,2,\dots,k$.  The result follows from Lemma \ref{dkseq} and Theorem \ref{abg}.
\end{proof}

\section{Dimensions of modules over $\mathcal{IC}_n$}\label{dimension}

Our aim of this section is to apply Theorem \ref{abg} for enumerating lattice paths to calculate the dimensions of submodules for $\mathcal{IC}_n$. To this end we need some preparations to describe precisely the structure of the modules involved; our results go a little deeper and wider than just for calculating the dimensions.

\subsection{Properties of modules over $\mathcal{IC}_n$}

A vector space $V$ over a field $F$ of characteristic $0$ is called an $\mathcal{IC}_n$-module if $\mathcal{IC}_n$ acts on $V$ satisfying, for all $f, f_1, f_2\in \mathcal{IC}_n$, $u, v\in V$, and $\lambda\in F$,
\begin{eqnarray*}
  f\cdot (u+v) &= f\cdot u + f\cdot v, \quad\quad\quad f_1\cdot (f_2\cdot u) &= (f_1f_2)\cdot u, \\
  f\cdot (\lambda u)  &= \lambda (f\cdot u),~\quad\quad\quad\quad\quad\quad\quad 1\cdot u &= u.
\end{eqnarray*}

From now on, $V$ denotes a vector space with a basis
$
   \mathcal{B} = \{v_S\mid S\subseteq\n\}
$
indexed by all the subsets of $\n$. Then $V=\bigoplus_{S\subseteq\n} Fv_S$ as subspaces is an $\mathcal{IC}_n$-module with respect to the following action: for $f\in \mathcal{IC}_n$ and $S\subseteq\n$,
\begin{equation*}
f\cdot v_S=
\left\{
  \begin{array}{ll}
    v_{S^\p}, & \hbox{if\; $S\subseteq D(f)$} \\
    0, & \hbox{otherwise,}
  \end{array}
\right.
\end{equation*}
where $S^\p=\{f(s_1),\dots,f(s_k)\}$ if $S=\{s_1,\dots,s_k\}$. For $0\le k \le n$, let
$$
    V_k=\mathrm{span}\{v_S\in\mathcal{B}\mid k=|S|\}.
$$
Then $V=\bigoplus^n_{k=0}V_k$ is a direct sum of $\mathcal{IC}_n$-submodules.

Every module under consideration is an $\mathcal{IC}_n$-module over $F$, unless otherwise stated.
To describe the $\mathcal{IC}_n$-module structure of $V_k$ and $V$, we
define a partial order on the power set of $\n$. For any $k$-subsets
$S=\{s_1<\dots<s_k\}$ and $T=\{t_1<\dots<t_k\}$ of $\n$, define
\begin{equation*}
    T\leq S \quad \Leftrightarrow \quad t_i\leq s_i \quad\text{for all}\quad i\in \mathbf{k}~,
\end{equation*}
and a $k$-subset is not comparable to any $l$-subset if $k\ne l$.

For $v\in V$ we use $\langle v \rangle$ to denote the cyclic submodule of $V$ generated by $v$. If $S$ is a $k$-subset of $\n$, then $\langle v_S \rangle$ is a submodule of $V_k$. Indeed, for any $f\in \mathcal{IC}_n$ if $S\subseteq D(f)$ then $f(S)$ is a $k$-subset, so $f\cdot v_S = v_{f(S)}\in V_k$; if $S$ is not a subset of $D(f)$ then $f\cdot v_S = 0\in V_k$.  Some further properties of the module $\langle v_S \rangle$ are described in the next result.
\begin{lemma}\label{mod1} Let $S,T$ be $k$-subsets of $\n$.

{\rm(1)} $\langle v_S \rangle = \bigoplus_{S'\subseteq\n,\, S'\leq S}Fv_{S'}$ as vector spaces. In particular, $V_k=\langle v_{\{n-k+1,\, \ldots,\, n\}} \rangle$.

{\rm(2)} $\langle v_T \rangle\subseteq \langle v_S \rangle$ if and only if $T\leq S$.

{\rm(3)} $\langle v_S \rangle\cap \langle v_T \rangle = \langle v_{S\wedge T} \rangle$, where $S\wedge T$ is the greatest lower bound of $S$ and $T$.
\end{lemma}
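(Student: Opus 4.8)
The plan is to prove part (1) first and then deduce (2) and (3) from it by essentially formal arguments. For (1), I would begin from the fact that $\langle v_S\rangle$ is the $F$-span of the vectors $f\cdot v_S$ over all $f\in\mathcal{IC}_n$ (this is a submodule since $g\cdot(f\cdot v_S)=(gf)\cdot v_S$, and it contains $v_S=1\cdot v_S$), and each such vector equals $v_{f(S)}$ when $S\subseteq D(f)$ and $0$ otherwise. So everything reduces to identifying which $k$-subsets occur as $f(S)$. The key claim is that $S'=f(S)$ for some $f\in\mathcal{IC}_n$ if and only if $S'\leq S$. Writing $S=\{s_1<\cdots<s_k\}$, the forward direction is immediate: since $f$ is order preserving, $f(s_1)<\cdots<f(s_k)$, so the $i$th smallest element of $S'=f(S)$ is $f(s_i)$; and since $f$ is order decreasing, $f(s_i)\leq s_i$, which is exactly $S'\leq S$. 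For the converse, given any $S'=\{s'_1<\cdots<s'_k\}$ with $S'\leq S$, I would define the partial map $f$ with domain $S$ by $f(s_i)=s'_i$; the $s'_i$ being strictly increasing makes $f$ order preserving, and $s'_i\leq s_i$ makes it order decreasing, so $f\in\mathcal{IC}_n$ and $f(S)=S'$.

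Once the claim is established, $\langle v_S\rangle=\mathrm{span}\{v_{S'}\mid S'\leq S\}$, and since these are distinct basis vectors the span is the direct sum $\bigoplus_{S'\leq S}Fv_{S'}$. For the ``in particular'' statement I would observe that $\{n-k+1,\ldots,n\}$ is the top element among the $k$-subsets: any $\{s_1<\cdots<s_k\}$ satisfies $s_i\leq n-k+i$ (from $s_i\leq s_k-(k-i)\leq n-(k-i)$), so $S'\leq\{n-k+1,\ldots,n\}$ for every $k$-subset $S'$, whence part (1) gives $\langle v_{\{n-k+1,\ldots,n\}}\rangle=\bigoplus_{|S'|=k}Fv_{S'}=V_k$.

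Part (2) then follows from (1): the inclusion $\langle v_T\rangle\subseteq\langle v_S\rangle$ holds iff $v_T\in\langle v_S\rangle=\bigoplus_{S'\leq S}Fv_{S'}$, and since $v_T$ is a single basis vector this happens iff $T$ is among the indexing sets, i.e. $T\leq S$; the reverse implication follows because $T\leq S$ forces every $T'\leq T$ to satisfy $T'\leq S$ by transitivity, giving $\langle v_T\rangle\subseteq\langle v_S\rangle$. For part (3) I would first check that the meet $S\wedge T$ exists and equals the componentwise minimum $W=\{\min(s_i,t_i)\}_{i=1}^{k}$: the only thing to verify is that this sequence is strictly increasing, and indeed $\min(s_{i+1},t_{i+1})>\min(s_i,t_i)$ since both $s_{i+1}$ and $t_{i+1}$ strictly exceed $\min(s_i,t_i)$; that $W$ is the greatest lower bound is then immediate because $w_i\leq s_i$ and $w_i\leq t_i$ together are equivalent to $w_i\leq\min(s_i,t_i)$. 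Since $\langle v_S\rangle$ and $\langle v_T\rangle$ are each spanned by a subset of the basis $\mathcal{B}$, their intersection is spanned by the common basis vectors (the elementary fact that an element of a free module lying in two coordinate subspaces has support in the intersection of the index sets), namely $\{v_W\mid W\leq S\text{ and }W\leq T\}$; by the meet property this index set is exactly $\{W\mid W\leq S\wedge T\}$, so the intersection is $\bigoplus_{W\leq S\wedge T}Fv_W=\langle v_{S\wedge T}\rangle$ by part (1).

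I expect the main obstacle to be the converse direction of the claim in part (1), where one must confirm that the explicitly constructed partial map $s_i\mapsto s'_i$ genuinely belongs to $\mathcal{IC}_n$, i.e. is simultaneously order preserving and order decreasing; this is the step that translates the combinatorial order $\leq$ into the monoid action. Everything else is a routine consequence of part (1), the elementary linear-algebra description of intersections of coordinate subspaces, and the verification that the $k$-subsets form a lattice under the componentwise order with meet given by the componentwise minimum.
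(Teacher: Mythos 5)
Your proof is correct, and for parts (1) and (2) it is essentially the paper's argument: the paper likewise reduces (1) to the equivalence that $S'\le S$ if and only if $S'=f(S)$ for some $f\in\mathcal{IC}_n$ with $D(f)=S$ (stated there as an observation without proof; you supply the verification, which is exactly where order preservation and order decrease enter), and (2) is the same formal consequence of (1) in both treatments. The genuine difference is part (3). The paper proves both inclusions by explicit constructions inside the monoid: from $g\cdot v_S=h\cdot v_T\ne 0$ it defines $f\in\mathcal{IC}_n$ with domain $S\wedge T$ by $f(\min(s_i,t_i))=g(s_i)$, and conversely recovers suitable $g$ and $h$ from a given nonzero $f\cdot v_{S\wedge T}$. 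That argument only treats vectors of the form $g\cdot v_S$, i.e.\ basis vectors, and leaves implicit why this suffices for arbitrary linear combinations lying in the intersection. You instead verify that the componentwise minimum is genuinely the greatest lower bound in the poset of $k$-subsets and then invoke the standard fact that the intersection of two spans of basis vectors is the span of the common basis vectors, so that (3) follows formally from (1). Your route is slightly more robust --- it makes explicit the linear-algebra reduction the paper glosses over --- at the cost of not exhibiting the partial maps that realize elements of $\langle v_{S\wedge T}\rangle$ inside both cyclic modules; given (1), nothing is lost by omitting that construction.
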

\begin{proof}

To prove (1) notice that two subsets $S'\leq S$ if and only if $S=D(f)$ and $S'=R(f)$ for a unique $f\in \mathcal{IC}_n$. Let $S'\le S$. Then $v_{S'} = f\cdot v_S\in \langle v_S \rangle$. Hence $\bigoplus_{S'\subseteq\n,\, S'\leq S}Fv_{S'}$ is included in $\langle v_S \rangle$. Conversely, let $x=g\cdot v_S \ne 0$ for some $g\in \mathcal{IC}_n$. We have $S\subseteq D(g)$, $g(S)\le S$, and hence $x=v_{g(S)}\in \bigoplus_{S'\subseteq\n,\, S'\leq S}Fv_{S'}$. The second part of (i) is now clear.

The proof of (2) follows from (1) since $\{T'\mid T'\subseteq\n,\, T'\leq T\}\subseteq\{S'\mid S'\subseteq\n,\, S'\leq S\}$ if and only if $T\le S$.

To prove (3) let $g\cdot v_S = h\cdot v_T\ne 0$ for some $g, h\in \mathcal{IC}_n$. Then $g(S)=h(T)$. Suppose
\[
    S = \{s_1< \ldots < s_k\}\quad\text{and}\quad T = \{t_1< \ldots < t_k\}~.
\]
Then
$
    S \wedge T =\{\min(s_1, t_1),\, \ldots,\, \min(s_k, t_k)\},
$
and $g(s_i)=h(t_i)$. We define $f\in \mathcal{IC}_n$ with $D(f)=S \wedge T $ and $R(f)=g(S)$ by
$
    f(\min(s_i,\, t_i)) =  g(s_i),
$
where $1\le i\le k$. Then $g\cdot v_S = f\cdot v_{S\wedge T} \in \langle v_{S\wedge T} \rangle$, and hence $\langle v_S \rangle\cap \langle v_T \rangle \subseteq \langle v_{S\wedge T} \rangle$.
Conversely, for any given $0\ne f\cdot v_{S\wedge T} \in \langle v_{S\wedge T} \rangle$ define $g(s_i)=h(t_i)= f(\min(s_i,\, t_i))$ for $1\le i\le k$. Then $f\cdot v_{S\wedge T} = g\cdot v_S=h\cdot v_T\in \langle v_S \rangle\cap \langle v_T \rangle$. The proof of (3) is complete.
\end{proof}

    Let $v=\sum_{S\subseteq \n}\lambda_Sv_S, \lambda_S\in F$ be a vector of $V$. The {\em support} of $v$ is defined to be
    \[
        {\rm supp}(v) = \{S\subseteq\n\mid \lambda_S \ne 0\}~.
    \]
%
\begin{definition}\label{redGen}
    A vector of the form $w=\sum_{S\in {\rm supp}(w)}v_S\in V$ is called a {\em reduced generator} of a submodule $W$ of $V$ if $W=\langle w \rangle$ and $W$ cannot be generated by any other vector whose support contains fewer elements than {\rm supp(}$w${\rm )}. We agree that $0$ is the reduced generator of the zero submodule.
\end{definition}
The next proposition gives some properties of submodules of $V$.
\begin{proposition}\label{cyclic} Let $v=\sum_{S\in\,{\rm supp(}v{\rm )}}\lambda_Sv_S\in V$.

{\rm (1)} If $S$ is in {\rm supp}$(v)$, then $v_S\in \langle v \rangle$~.

{\rm (2)} $\langle v \rangle = \bigoplus_{T\in \mathcal{P}(v)} Fv_T$ as subspaces, where $\mathcal{P}(v) = \bigcup_{S\in {\rm supp}(v)}\{T\subseteq\n\mid T\le S\}$.

{\rm (3)} Every submodule of $V$ is cyclic and contains a unique reduced generator.
\end{proposition}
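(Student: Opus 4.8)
The plan is to reduce everything to part (1), since once we know each basis vector appearing in a support lies in the cyclic submodule it generates, parts (2) and (3) become bookkeeping with the poset from Lemma~\ref{mod1}. The key tools are the partial identities $e_U\in\mathcal{IC}_n$ with $D(e_U)=R(e_U)=U$ acting as the identity on $U$, for which $e_U\cdot v_T=v_T$ when $T\subseteq U$ and $e_U\cdot v_T=0$ otherwise, together with the monoid-algebra elements $E_r=\sum_{|U|=r}e_U$. A direct count gives $E_r\cdot v_T=\binom{n-|T|}{r-|T|}v_T$, so $E_r$ scales $V_k$ by $\binom{n-k}{r-k}$. Since $\langle v\rangle=\operatorname{span}\{f\cdot v:f\in\mathcal{IC}_n\}$ is a subspace, it is closed under applying the $E_r$. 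I expect the genuine obstacle to be hidden inside part (1): a submodule of $\bigoplus_k V_k$ need not be graded a priori, so the crux is extracting a homogeneous component of $v$ inside $\langle v\rangle$.

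For part (1) I would first separate the homogeneous components of $v=\sum_S\lambda_S v_S$ relative to $V=\bigoplus_k V_k$. Writing $v=\sum_k v^{(k)}$ and letting $k_0$ be the least cardinality occurring in $\operatorname{supp}(v)$, the identity $E_{k_0}\cdot v=v^{(k_0)}$ holds because $\binom{n-k}{k_0-k}=0$ for $k>k_0$ and equals $1$ for $k=k_0$; this single idempotent-sum returns exactly the bottom component. Hence $v^{(k_0)}\in\langle v\rangle$, and $v-v^{(k_0)}\in\langle v\rangle$ involves strictly fewer cardinalities, so induction on the number of cardinalities shows every $v^{(k)}\in\langle v\rangle$. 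Finally, if $S\in\operatorname{supp}(v)$ has $|S|=k$, then within degree $k$ the only support set contained in $S$ is $S$ itself, so $e_S\cdot v^{(k)}=\lambda_S v_S$, giving $v_S=\lambda_S^{-1}e_S\cdot v^{(k)}\in\langle v\rangle$. The only computations I would verify carefully are the scaling identity for $E_r$ and the vanishing $\binom{n-k}{k_0-k}=0$ for $k>k_0$, as these are load-bearing.

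Part (2) then follows formally. For $\supseteq$, any $T\in\mathcal{P}(v)$ satisfies $T\le S$ for some $S\in\operatorname{supp}(v)$; by part (1) $v_S\in\langle v\rangle$, and Lemma~\ref{mod1}(1) gives $v_T\in\langle v_S\rangle\subseteq\langle v\rangle$. For $\subseteq$, every generator $f\cdot v=\sum_{S\subseteq D(f)}\lambda_S v_{f(S)}$ has each $f(S)\le S$ by order decreasingness, with $S\in\operatorname{supp}(v)$, so $f(S)\in\mathcal{P}(v)$; as $\langle v\rangle$ is spanned by such $f\cdot v$, it sits inside $\bigoplus_{T\in\mathcal{P}(v)}Fv_T$. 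Directness is automatic since the $v_T$ lie in the basis $\mathcal{B}$.

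For part (3), the first step is that every submodule $W$ is monomial: applying part (1) to an arbitrary $w\in W$ (so $\langle w\rangle\subseteq W$) shows each $v_S$ with $S\in\operatorname{supp}(w)$ lies in $W$, whence $W=\bigoplus_{T\in\mathcal{T}}Fv_T$ where $\mathcal{T}=\{T\subseteq\n:v_T\in W\}$, and $\mathcal{T}$ is a lower set for $\le$ by Lemma~\ref{mod1}(1). Letting $M$ be the set of $\le$-maximal elements of the finite poset $\mathcal{T}$ and $w=\sum_{S\in M}v_S$, one gets $\mathcal{P}(w)=\bigcup_{S\in M}\{T:T\le S\}=\mathcal{T}$, so part (2) yields $\langle w\rangle=W$; thus $W$ is cyclic with $w$ in reduced form. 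For uniqueness I would show any generator $u$ of $W$ has $M\subseteq\operatorname{supp}(u)$: comparing part (2) for $u$ with $W=\bigoplus_{T\in\mathcal{T}}Fv_T$ forces $\mathcal{P}(u)=\mathcal{T}$, so each $m\in M$ satisfies $m\le S$ for some $S\in\operatorname{supp}(u)\subseteq\mathcal{T}$, and maximality of $m$ forces $S=m$. Hence $|\operatorname{supp}(u)|\ge|M|$, with equality only when $\operatorname{supp}(u)=M$; a reduced generator therefore has support exactly $M$ and, being of the form $\sum_{S}v_S$, must equal $w$, proving uniqueness.
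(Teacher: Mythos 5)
Your proposal is correct, and its overall skeleton (partial identities, induction on cardinality from the bottom, then maximal-element bookkeeping for (2) and (3)) matches the paper's; the one genuine difference is the mechanism in part (1). The paper never decomposes $v$ into homogeneous components at all: it observes that if $T\in{\rm supp}(v)$ has \emph{minimal} cardinality, then the single element $e_T\in\mathcal{IC}_n$ already annihilates every other $S\in{\rm supp}(v)$ --- sets of larger cardinality cannot lie in $T$, and distinct sets of the same cardinality cannot either --- so $e_T\cdot v=\lambda_T v_T$ in one stroke; subtracting the extracted minimal-cardinality terms and iterating finishes (1). You instead pass through the monoid algebra, using the operators $E_r=\sum_{|U|=r}e_U$ (legitimate, since $\langle v\rangle$ is a subspace closed under each $e_U$) to project onto the lowest graded piece via $E_{k_0}\cdot v=v^{(k_0)}$, and only then apply $e_S$ within a fixed degree. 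Both arguments are sound; yours costs an extra stage but isolates a reusable fact (the $E_r$ act diagonally on $V_k$ by $\binom{n-k}{r-k}$), while the paper's shows that the ``crux'' you anticipated --- that a submodule of $\bigoplus_k V_k$ need not be graded a priori --- never actually has to be confronted, because minimality of $|T|$ does the work of the grading for free. Your proofs of (2) and (3) are essentially the paper's: (2) follows from (1) plus Lemma \ref{mod1}(1) (your spanning-set argument via $f(S)\le f$'s order-decreasing images is a harmless rephrasing), and in (3) both you and the paper generate $W$ by the sum of $v_S$ over the maximal elements of the down-closed set $\mathcal{T}$ and get uniqueness by showing every generator's support must contain those maximal elements; your direct verification that $M\subseteq{\rm supp}(u)$ is, if anything, slightly cleaner than the paper's detour through ${\rm Red}(v)={\rm Red}(w)$.
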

\begin{proof} To prove (1) let $\min \big\{\,|S| \,\big|\, S \in {\rm supp}(v)\big\}=r$. Then there exists an $r$-subset $T=\{t_1<\cdots < t_r\}\subseteq\n$ such that $T\in {\rm supp}(v)$; if $r=0$, then $T=\emptyset$. Let $f\in \mathcal{IC}_n$ such that $D(f)=R(f)=T$. By the choice of $r$, for every $S\in{\rm supp}(v)$ with $S\neq T$, there is at least one $s\in S$ such that $s\notin T$, so $f\cdot v_S=0$. Hence
$$
    f\cdot v=f\cdot \sum_{S\in\,{\rm supp(}v{\rm )}}\lambda_Sv_S=\sum_{S\in\,{\rm supp(}v{\rm )}}\lambda_S(f\cdot v_S)=\lambda_Tv_T~.
$$
Thus $v_T\in \langle v \rangle$ since $\lambda_T\neq 0$. It is easily seen that
$$
    \sum_{S\in\,{\rm supp(}v{\rm )}\atop |S|>r}\lambda_Sv_S=v-\sum_{S\in\,{\rm supp(}v{\rm )}\atop |S|=r}\lambda_Sv_S\in \langle v \rangle~.
$$
Applying the above procedure to
$
    \sum_{S\in\,{\rm supp(}v{\rm )},\,|S|>r}\lambda_Sv_S
$
and iteratively using this procedure, if needed, we get $v_S\subseteq \langle v \rangle$ for all $S\in \text{supp}(v)$. The proof of (1) is complete.

From (1) and Lemma \ref{mod1} (1), we have
\begin{eqnarray*}
   \langle v \rangle&=& \sum_{S \in\text{\rm supp}(v)}\lambda_S\langle v_S \rangle \\
&=&\sum_{S \in\text{\rm supp}(v)}\mathrm{span}\,\{v_T\in\mathcal{B}\mid T\leq S\}\\
&=& \bigoplus_{S\in \mathcal{P}(v)} Fv_S, \quad\text{as subspaces}.
\end{eqnarray*}
This completes the proof of (2).

We now prove (3). It is trivial for $W=\{0\}$. Let W be a nonzero submodule of $V$. We claim that $W$ has a basis $\{v_S\in\mathcal{B}\mid S\in \mathcal{P}\}$ for some subset $\mathcal{P}$ of the power set of $\n$. Indeed, suppose $\mathcal{B}_1$ is a basis of $W$ and write every element of $\mathcal B_1$ as a linear combination of basis vectors in $\mathcal{B}=\{v_S\mid S\subseteq \n\}$. Let $\mathcal P$ be the set of all the different subsets $S$ where $S$ runs through the support of every element of $\mathcal B_1$. By (1) the set $\{v_S\in\mathcal{B}\mid S\in \mathcal{P}\}$ is a subset of $W$, and hence a basis of $W$ since it is linearly independent and spans $W$. Let
$
    w=\sum_{S\in\mathcal{P}} v_S.
$
By (1) again, $W$ is generated by $w$, and hence $W$ is cyclic.

We now show how to deduce a reduced generator of $W$ from $w$. Indeed, if $w$ contains two vectors $v_S, \,v_T$ with $T\le S$ and $T\ne S$ in supp($w$), then we can remove the term $v_T$ from $w$, and by Lemma \ref{mod1} (i) the sum of the remaining terms is still a generator. Repeat this process until we obtain the set
\[
    {\rm Red}(w) = \{S\mid  S \;\text{is maximal in supp}(w)\},
\]
and then we define the corresponding generator $w_{\rm red}$ of $W$ by
\[
    w_{\rm red}=\sum_{S\in{\rm Red}(w)}v_S~.
\]
We claim that $w_{\rm red}$ is a reduced generator of $W$. Let $v = \sum_{S\in\text{supp}(v)}\lambda_Sv_S$ be another generator of $W$. From Definition \ref{redGen} it suffices to show that $|{\rm supp(}v{\rm )}| \ge |{\rm Red(}w{\rm )}|$.
From (2) we find $W = \bigoplus_{T\in \mathcal{P}(v)} Fv_T = \bigoplus_{T\in \mathcal{P}(w)} Fv_T$ where $\mathcal{P}(v)$ and $\mathcal{P}(w)$ are as in (2),
and hence
$
    \mathcal{P}(v) = \mathcal{P}(w).
$
Define
\begin{equation}\label{redv}
    {\rm Red}(v) = \{S\mid  S \text{ is maximal in supp}(v)\}.
\end{equation}
Thus, ${\rm Red}(v) = \{S\mid S \text{ is maximal in }\mathcal{P}(v)\}$ and
${\rm Red}(w) = \{S\mid  S \text{ is maximal in }\mathcal{P}(w)\}$.
So, Red($v$) = Red($w$) and $|{\rm supp(}v{\rm )}| \ge |{\rm Red(}v{\rm )}| = |{\rm Red(}w{\rm )}|$, showing that $w_{\rm red}$ is reduced.

Suppose that $v = \sum_{S\in{\rm supp}(v)}v_S$ is another reduced generator of $W$. By the definition of reduced generators we know $|{\rm supp(}v{\rm )}| = |{\rm Red(}w{\rm )}|$. Hence $|{\rm supp(}v{\rm )}| =  |{\rm Red(}v{\rm )}|$ since Red($v$) = Red($w$). It follows that ${\rm supp(}v{\rm )} = {\rm Red(}v{\rm )}$. Let $v_{\rm red}=\sum_{S\in{\rm Red}(v)}v_S$. Then $v=v_{\rm red}=w_{\rm red}$. Therefore $w_{\rm red}$ is the unique reduced generator of $W$.
\end{proof}

\begin{definition} The set ${\rm Red}(v)$ in {\rm (\ref{redv})} is called the {\em reduced support} of $v$, and the element $v_{\rm red}=\sum_{S\in{\rm Red}(v)}v_S$ is termed the {\em reduced form} of $v$. The reduced support of $0$ is empty, and the reduced form of $0$ is itself.
\end{definition}

For example, if $n=7$ and $v = v_\emptyset - 2v_{\{1\}} + v_{\{3\}} + 5 v_{\{1, \,2\}} + 3v_{\{4,\, 7\}} - 2v_{\{5, \,6\}} + v_{\{1, \,2, \,3\}}$, then Red($v$) = $\{\emptyset,\,\{3\},\,\{5,\, 6\},\,\{4,\, 7\}, \{1, \,2, \,3\}\}$ is the reduced support of $v$, and its reduced form is $v_{\rm red} = v_\emptyset + v_{\{3\}} + v_{\{4,\, 7\}} + v_{\{5, \,6\}} + v_{\{1, \,2, \,3\}}$.

It is sometimes convenient to call the reduced support of $v$ the {\em reduced support} of the module $\langle v \rangle$.
A direct calculation yields that the reduced generator of $V_k$ is $v_{\{n-k+1,\,\ldots,\,n\}}$ for $1\le k\le n$, and the reduced support of $V_k$ is the set $\{n-k+1,\,\ldots,\,n\}$. The module $V_0$ has the element $v_\emptyset$ as its reduced generator, and its reduced support is the set $\{\emptyset\}$.

The next result is a consequence of Lemma \ref{mod1} (i) and Proposition \ref{cyclic} (3).

\begin{corollary}\label{eq}
    If $v, w\in V$, then $\langle v \rangle = \langle w \rangle$ if and only if they have the same reduced support {\rm Red(}v{\rm)} =  {\rm Red(}w{\rm)} if and only if they have the same reduced generator $v_{\rm red} = w_{\rm red}$.
\end{corollary}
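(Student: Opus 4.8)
The plan is to prove the two stated equivalences as a single chain,
\[
    \langle v \rangle = \langle w \rangle \iff v_{\rm red} = w_{\rm red} \iff {\rm Red}(v) = {\rm Red}(w),
\]
with the crucial intermediate fact being that for \emph{any} $v\in V$ the reduced form $v_{\rm red}$ is the unique reduced generator of $\langle v \rangle$.

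First I would dispose of the equivalence $v_{\rm red} = w_{\rm red} \iff {\rm Red}(v) = {\rm Red}(w)$, which is purely formal. By definition $v_{\rm red} = \sum_{S\in{\rm Red}(v)}v_S$ and $w_{\rm red} = \sum_{S\in{\rm Red}(w)}v_S$ are each $0$--$1$ combinations of the basis $\mathcal{B}$, so the linear independence of $\mathcal{B}$ forces $v_{\rm red} = w_{\rm red}$ exactly when the index sets ${\rm Red}(v)$ and ${\rm Red}(w)$ coincide.

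The substance lies in $\langle v \rangle = \langle w \rangle \iff v_{\rm red} = w_{\rm red}$, and here the key observation is that $v_{\rm red}$ is the unique reduced generator of $\langle v \rangle$ for every $v$. This is precisely what the proof of Proposition \ref{cyclic} (3) yields: applied to the submodule $W = \langle v \rangle$ together with the tautological generator $v$, that argument shows ${\rm Red}(v)$ is the reduced support of $W$ and, via Lemma \ref{mod1} (1), that $v_{\rm red}$ generates $W$; the uniqueness clause of Proposition \ref{cyclic} (3) then identifies $v_{\rm red}$ with the reduced generator of $\langle v \rangle$. Granting this, the forward direction is immediate: if $\langle v \rangle = \langle w \rangle$, then this single submodule has, by Proposition \ref{cyclic} (3), a unique reduced generator, which must equal both $v_{\rm red}$ and $w_{\rm red}$, whence $v_{\rm red} = w_{\rm red}$. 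For the converse I would note that $\langle v \rangle = \langle v_{\rm red} \rangle$ and $\langle w \rangle = \langle w_{\rm red} \rangle$ (since each reduced form generates the respective module), so $v_{\rm red} = w_{\rm red}$ gives $\langle v \rangle = \langle v_{\rm red} \rangle = \langle w_{\rm red} \rangle = \langle w \rangle$.

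I expect the only genuine obstacle to be the intermediate fact that $v_{\rm red}$ is the reduced generator of $\langle v \rangle$ for \emph{arbitrary} $v$, rather than only for the canonically chosen generator $w=\sum_{S\in\mathcal{P}}v_S$ constructed inside the proof of Proposition \ref{cyclic} (3). This is resolved by observing that the relevant portion of that proof is phrased for an arbitrary generator of the module, and that $v$ is trivially a generator of $\langle v \rangle$; once this is secured, the remainder is routine bookkeeping with the basis $\mathcal{B}$ and the uniqueness assertion of Proposition \ref{cyclic} (3).
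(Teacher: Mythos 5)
Your proof is correct and follows the same route the paper intends: the paper simply declares the corollary ``a consequence of Lemma \ref{mod1} (1) and Proposition \ref{cyclic} (3),'' and your argument is precisely that derivation spelled out, including the key point that the uniqueness argument inside the proof of Proposition \ref{cyclic} (3) applies to an arbitrary generator, so that $v_{\rm red}$ is the unique reduced generator of $\langle v\rangle$ for every $v$. The only addition you make is the (correct, purely formal) observation that ${\rm Red}(v)={\rm Red}(w)$ iff $v_{\rm red}=w_{\rm red}$ by linear independence of $\mathcal{B}$.
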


\subsection{Dimensions of submodules of $V$}

We now describe the dimension of $\langle v_S \rangle$ for any $S\subseteq\n$.
\begin{theorem}\label{dim}
If $S=\{s_1<\dots<s_k\}$ is a $k$-subset of $\n$, let $d_S$ be the dimension of the module $\langle v_S \rangle$. If $k=1$ then $d_S = s_1$, and for $k\ge 2$,
\begin{align}
   d_S &= \sum^{k-1}_{i=1}{s_{k-i+1} \choose k+1-i}\gamma_i -\sum^{k-1}_{i=1}{s_{k-i+1}-s_1 \choose k+1-i}\gamma_i-\sum^{k-2}_{i=1}s_1 {s_{k-i+1}-s_2 \choose k-i}\gamma_i ~, \label{dk}
\end{align}
where $\gamma_1=1$ and for $2\leq j\leq k-1$,
\begin{equation*}\label{}
    \gamma_j=-\sum^{j-2}_{i=1}
              {s_{k+1-i}-s_{k+2-j} \choose j-i}\gamma_i ~.
    \end{equation*}
\end{theorem}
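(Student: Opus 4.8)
The plan is to reduce the dimension of $\langle v_S \rangle$ to the lattice-path count $d_k$ of Theorem \ref{abg} by means of an explicit bijection, and then to read off the stated formula by substituting the resulting height sequence. First I would apply Lemma \ref{mod1}(1), which gives $\langle v_S \rangle = \bigoplus_{S' \le S} Fv_{S'}$ as vector spaces; hence $d_S$ equals the number of $k$-subsets $S' = \{s_1' < \cdots < s_k'\}$ of $\mathbf{n}$ with $s_i' \le s_i$ for every $i$. The problem is thereby reduced to the purely enumerative one of counting strictly increasing sequences $1 \le s_1' < \cdots < s_k'$ subject to the ceilings $s_i' \le s_i$.

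The key step is to convert these strictly increasing sequences into the weakly decreasing sequences counted in Lemma \ref{dkseq}. The standard device is to subtract the index: the sequence $a_i := s_i' - i$ is weakly increasing with $a_i \ge 0$, and reversing it yields $\mu_j := a_{k-j+1} = s_{k-j+1}' - (k-j+1)$, which is weakly decreasing with $\mu_j \ge 0$. Setting $\lambda_j := s_{k-j+1} - (k-j+1)$, the ceiling $s_i' \le s_i$ translates exactly into $\mu_j \le \lambda_j$, and the assignment $S' \mapsto (\mu_1, \ldots, \mu_k)$ is a bijection onto the set of nonnegative weakly decreasing sequences bounded by $(\lambda_1, \ldots, \lambda_k)$, with inverse $s_j' = \mu_{k-j+1} + j$. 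The same index subtraction shows $\lambda_1 \ge \cdots \ge \lambda_k \ge 0$, so $(\lambda_1, \ldots, \lambda_k)$ is a genuine decreasing height sequence and Lemma \ref{dkseq} gives $d_S = d_k$ for this $\lambda$.

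It then remains to substitute $\lambda_i = s_{k-i+1} - (k-i+1)$ into the formula of Theorem \ref{abg} together with its recursion (\ref{gamma}). The upper binomial arguments collapse cleanly: $\lambda_i + k - i + 1 = s_{k-i+1}$, $\lambda_i - \lambda_k + k - i = s_{k-i+1} - s_1$, $\lambda_i - \lambda_{k-1} + k - i - 1 = s_{k-i+1} - s_2$, and $\lambda_i - \lambda_{j-1} + j - i - 1 = s_{k+1-i} - s_{k+2-j}$, while $\lambda_k + 1 = s_1$; each term then matches exactly the corresponding term in the statement, and the case $k = 1$ reads off at once as $d_S = \lambda_1 + 1 = s_1$. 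I expect the only real obstacle to be the index bookkeeping, namely verifying that $\lambda_j$ is weakly decreasing and that each substitution lands precisely on the claimed binomial coefficient, since all the conceptual weight is carried by the bijection of the previous paragraph, after which the conclusion is a routine rewriting.
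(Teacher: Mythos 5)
Your proposal is correct and follows essentially the same route as the paper's proof: reduce to counting $k$-subsets $S'\le S$ via Lemma \ref{mod1}(1), apply the index-subtraction substitution $\lambda_i = s_{k-i+1}-(k-i+1)$ to convert the count into the one handled by Lemma \ref{dkseq} and Theorem \ref{abg}, and then substitute to obtain the stated binomials. The only difference is presentational: you spell out the bijection and the verification of the binomial arguments in more detail than the paper does.
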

\begin{proof}
By Lemma \ref{mod1} (i) we know that $d_S$ is equal to the number of $k$-subsets $T$ of $\n$ such that $T\leq S$. Let
\begin{equation}\label{lai}
    \la_i=s_{k-i+1}-(k-i+1)\quad\text{ for }\quad 1\le i \le k~.
\end{equation}
Then $\lambda_i\ge\lambda_{i+1}$ since $s_{k-i+1}>s_{k-i}$.
Because the smallest $k$-subset is $\{1,\dots,k\}$, we have
\begin{equation}\label{lambdaSequence}
    \la_1\geq \dots\geq\la_k\geq 0~,
\end{equation}
and the number of $k$-subsets $T$ of $\n$ with $T\leq S$ is equal to the number $d_k$ of all the sequences
\begin{equation*}
    \mu_1\geq \dots\geq \mu_k\geq 0\quad\text{with}\quad \mu_i\leq \lambda_i\quad\text{for}\quad i=1,\dots,k~.
\end{equation*}
Thus $d_S = d_k$. From Lemma \ref{dkseq} and Theorem \ref{abg} we have
$d_1= \lambda_1+1$, and for $k\ge 2$,
{\small
\[
d_k = \sum^{k-1}_{i=1}\bigg[{\la_i+k-i+1 \choose k+1-i} -{\la_i-\la_k+k-i \choose k+1-i}\bigg]\gamma_i -\sum^{k-2}_{i=1}(\la_k+1) {\la_i-\la_{k-1}+k-i-1 \choose k-i}\gamma_i~,
\]
}
where
\begin{eqnarray*}
\gamma_1 = 1 \quad\text{and}\quad\gamma_j =-\sum^{j-2}_{i=1}
          {\la_i-\la_{j-1}+j-i-1 \choose j-i}\gamma_i~\quad\text{for } j\ge 2~.
\end{eqnarray*}
Using (\ref{lai}), we conclude that (\ref{dk}) holds.
\end{proof}

\begin{corollary}\label{dimspeci1}
If $S=\{2,4,\ldots,2k\} \subseteq \mathbf{n}$, then the dimension of the submodule $\langle v_S \rangle$ is the Catalan number
$c_{k+1}$.
\end{corollary}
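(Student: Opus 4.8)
The plan is to reduce the statement to a staircase lattice-path count and then recognize that count as a Catalan number. First I would apply Theorem \ref{dim} (equivalently, Lemma \ref{mod1}(1)): the dimension $d_S$ equals the number of $k$-subsets $T\le S$, which by the reduction carried out in the proof of Theorem \ref{dim} equals $d_k$ for the height sequence $\lambda_i=s_{k-i+1}-(k-i+1)$. Substituting $s_j=2j$ gives $\lambda_i=2(k-i+1)-(k-i+1)=k-i+1$, so the height sequence collapses to the staircase $\lambda_1\ge\cdots\ge\lambda_k$ equal to $k\ge(k-1)\ge\cdots\ge 1$.

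Next, by Lemma \ref{dkseq}, $d_k$ is the number of integer sequences $\mu_1\ge\cdots\ge\mu_k\ge 0$ with $\mu_i\le k-i+1$. This is exactly the configuration treated in the proof of Corollary \ref{comId}, so the quickest finish is to cite that the staircase count equals $c_{k+1}$. For a self-contained argument I would reverse the sequence, setting $\nu_i=\mu_{k-i+1}$, which converts the constraints into $0\le\nu_1\le\cdots\le\nu_k$ with $\nu_i\le i$; these subdiagonal nondecreasing sequences biject with Dyck paths of semilength $k+1$ (equivalently, with ballot sequences), and are therefore enumerated by $c_{k+1}=\frac{1}{k+2}\binom{2k+2}{k+1}$. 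Chaining these equalities yields $d_S=d_k=c_{k+1}$.

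As an alternative to the combinatorial count, one can substitute $s_{k-i+1}=2(k-i+1)$ directly into formula (\ref{dk}) of Theorem \ref{dim}: since $s_1=2$ and $s_2=4$, the three binomial sums become $\binom{2(k-i+1)}{k+1-i}$, $\binom{2(k-i)}{k+1-i}$, and $2\binom{2(k-i-1)}{k-i}$, while the recursion for $\gamma_j$ turns into $-\sum_i\binom{2(j-i-1)}{j-i}\gamma_i$, so the whole expression coincides verbatim with the right-hand side of Corollary \ref{comId}, whose value is $c_{k+1}$. Either way, the only genuinely nontrivial step, and the main obstacle, is the identification of the staircase count with the Catalan number: the reductions via Theorem \ref{dim} and Lemma \ref{dkseq} are routine substitutions, whereas showing that sequences below the staircase are counted by $c_{k+1}$ requires a real combinatorial argument (a bijection, the reflection principle, or the cycle lemma), and it is precisely the fact already invoked in the proof of Corollary \ref{comId}.
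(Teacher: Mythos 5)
Your proposal is correct and takes essentially the same approach as the paper: substitute $s_j=2j$ into the reduction of Theorem \ref{dim} to obtain the staircase sequence $k\ge k-1\ge\cdots\ge 1$, then identify the number of sequences below this staircase with the Catalan number $c_{k+1}$ --- a fact the paper simply cites as well known. Your Dyck-path bijection and the alternative verification via Corollary \ref{comId} only fill in the details of that same final step, so there is no substantive difference in route.
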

\begin{proof}
           The sequence $\la_1\geq \dots\geq\la_k\geq 0$ in (\ref{lambdaSequence}) associated to $S$ is now $k \geq k-1\geq\cdots\geq 1$. In this case, it is well known that the number of sequences $\mu_1\ge\mu_2\ge\ldots\ge\mu_k$ of nonnegative integers such that $\mu_i\leq\la_i$ for $1\leq i\leq k$ is the Catalan number $c_{k+1}$. The desired result follows from Theorem \ref{dim}.
\end{proof}

\begin{corollary}\label{dimspeci2}
If the $k$-subset $S=\{m+1,\ldots,m+k\}\subseteq\mathbf{n}$, the
dimension of the submodule $\langle v_S \rangle$ is ${m+k \choose k}$.
\end{corollary}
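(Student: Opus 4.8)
The plan is to follow the same route as in the proof of Corollary \ref{dimspeci1}, reducing the computation to a count of weakly decreasing sequences via Lemma \ref{dkseq} rather than plugging into the explicit formula. First I would substitute $s_i = m+i$ into the definition (\ref{lai}) of the $\la_i$; a one-line computation gives $\la_i = s_{k-i+1} - (k-i+1) = m$ for every $i$, so the associated sequence (\ref{lambdaSequence}) is the constant sequence $m \ge m \ge \cdots \ge m \ge 0$.

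By Lemma \ref{dkseq} the dimension $d_S$ then equals the number of weakly decreasing sequences $\mu_1 \ge \cdots \ge \mu_k \ge 0$ of integers with $\mu_i \le \la_i = m$ for all $i$. Since the upper bound is the constant $m$ and the sequence is already weakly decreasing, the full set of constraints collapses to $m \ge \mu_1 \ge \cdots \ge \mu_k \ge 0$; that is, I only need to count weakly decreasing sequences of length $k$ taking values in $\{0, 1, \ldots, m\}$.

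The final step is the standard observation that such sequences are in bijection with multisets of size $k$ drawn from the $m+1$ values $\{0, 1, \ldots, m\}$, so their number is $\binom{(m+1)+k-1}{k} = \binom{m+k}{k}$, which is the asserted dimension. Equivalently, working directly from Lemma \ref{mod1}(1), I would count the $k$-subsets $T = \{t_1 < \cdots < t_k\}$ with $t_i \le m+i$, and apply the shift $u_i = t_i - i$ to convert these into weakly increasing sequences $0 \le u_1 \le \cdots \le u_k \le m$, yielding the same multiset count.

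There is essentially no obstacle here: the only substantive ingredient beyond the already-established machinery is the elementary stars-and-bars identity for multisets. I would deliberately avoid substituting into the explicit formula (\ref{dk}) of Theorem \ref{dim}, since although that expression must also collapse to $\binom{m+k}{k}$, verifying the cancellation among the binomial and $\gamma_i$ terms by hand is far messier than the direct count and offers no additional insight.
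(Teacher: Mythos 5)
Your proof is correct, but it takes a genuinely different route from the paper's. The paper's own proof makes the same first move as you do (substituting $s_i = m+i$ into (\ref{lai}) to get the constant sequence $\la_1 = \cdots = \la_k = m$), but then it proceeds by \emph{substituting into the explicit formula} (\ref{dk}) of Theorem \ref{dim} --- the route you deliberately avoid --- and asserts that ``a direct calculation'' of the alternating sums of binomial coefficients and $\gamma_i$'s collapses to $\binom{m+k}{k}$, with the tedious details omitted. You instead reduce to counting weakly decreasing sequences $m \ge \mu_1 \ge \cdots \ge \mu_k \ge 0$ (equivalently, via Lemma \ref{mod1}(1) and the shift $u_i = t_i - i$, to counting $k$-subsets $T \le S$) and finish with the stars-and-bars multiset count $\binom{(m+1)+k-1}{k} = \binom{m+k}{k}$. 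This is in fact the same strategy the paper uses for Corollary \ref{dimspeci1} (the Catalan case), where a known enumeration is invoked rather than the formula. What each approach buys: yours is complete, elementary, and leaves no computation unverified, whereas the paper's route (had it been carried out) doubles as a sanity check that the complicated iterative formula of Theorem \ref{abg}/\ref{dim} really does simplify to a single binomial coefficient in this case. Indeed, combining your argument with Theorem \ref{dim} yields that simplification as a combinatorial identity for free, rather than requiring one to verify the cancellations by hand; the only thing your proof does not do is exercise the formula that the corollary was presumably meant to illustrate.
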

\begin{proof}
   The sequence in (\ref{lambdaSequence}) corresponding to $S$ is the $k$-subset $\{m, \ldots,m\}$. A direct calculation of $d_S$ for $k\geq 1$ using the formulas given in Theorem \ref{dim} yields
    \begin{equation*}\label{}
        d_k={m+k \choose k}~,
    \end{equation*}
which is the desired result.
\end{proof}

We now compute the dimension $d_{k,\, m}$ of the $\mathcal{IC}_n$-module $\langle v_{S_{k,\,m}}\rangle$, where $k\ge m$ and
$$
    S_{k,\,m}=\{2,\,4,\,\ldots,\,2m,\,2m+1,\,2m+2,\,\ldots,\,2m+(k-m)\}
$$
is a subset of $\mathbf{n}$, which consists of both subsets in Corollaries \ref{dimspeci1} and \ref{dimspeci2}.
\begin{corollary}
For $m\ge 2$ the dimension of the module $\langle v_{S_{k,\,m}}\rangle$ is
\begin{eqnarray*}\label{catagene2}
 \nonumber d_{k,\,m}&=&{m+k \choose k}-{m+k-2 \choose k}-2
          {m+k-4 \choose k-1} +\sum^{k-1}_{i=k-m+3}{2(k-i+1) \choose k+1-i}\gamma_i \\
  &&{}-\sum^{k-1}_{i=k-m+3}{2(k-i) \choose k+1-i}\gamma_i-\sum^{k-2}_{i=k-m+3}2
          {2(k-i-1) \choose k-i}\gamma_i
\end{eqnarray*}
where $\gamma_1=1,\gamma_2=\gamma_3=\dots=\gamma_{k-m+2}=0,
\gamma_{k-m+3}=-1$, and for $i\geq k-m+4$

\begin{equation*}\label{catagene3}
\gamma_i =-{m-k+2i-4\choose i-1}-\sum^{i-2}_{j=k-m+3}
          {2(i-j-1) \choose i-j}\gamma_j~.
\end{equation*}
\end{corollary}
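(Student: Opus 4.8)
The plan is to reduce everything to Theorem \ref{dim} by first computing the decreasing height sequence $\la_1 \ge \cdots \ge \la_k$ that the substitution (\ref{lai}) attaches to $S_{k,m}$. Writing $s_{k-i+1}$ out from the definition of $S_{k,m}$, one finds $\la_i = m$ for $1 \le i \le k-m+1$ and $\la_i = k-i+1$ for $k-m+2 \le i \le k$; that is, the height sequence is a plateau of $k-m+1$ copies of $m$ followed by the staircase $m-1, m-2, \ldots, 1$. This is exactly the sequence interpolating between the constant sequence of Corollary \ref{dimspeci2} and the full staircase of Corollary \ref{dimspeci1}, which is the structural reason the answer mixes a binomial head with a Catalan-type tail.

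The key step is to evaluate the coefficients $\gamma_j$ for this sequence. First I would show $\gamma_2 = \cdots = \gamma_{k-m+2} = 0$: for $j$ in this range both $\la_{j-1}$ and every $\la_i$ occurring in the defining sum of $\gamma_j$ lie on the plateau and equal $m$, so each binomial $\binom{\la_i - \la_{j-1} + j-i-1}{j-i} = \binom{j-i-1}{j-i}$ vanishes. Next, for $j = k-m+3$ the index $j-1$ is the first slope position, so $\la_{j-1} = m-1$, and the only surviving term is $i=1$ (since $\gamma_1=1$ is the lone nonzero earlier coefficient), contributing $\binom{k-m+2}{k-m+2}\gamma_1 = 1$ and hence $\gamma_{k-m+3} = -1$. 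Finally, for $i \ge k-m+4$ I would split the defining sum according to whether the summation index lies on the plateau (where $\la = m$) or on the slope (where $\la_{\ell} = k-\ell+1$): the plateau contributes only through $\ell=1$, giving the single binomial $\binom{m-k+2i-4}{i-1}$, while each slope index yields $\binom{2(i-\ell-1)}{i-\ell}\gamma_\ell$, which is precisely the claimed recursion.

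With the $\gamma_j$ in hand, the proof finishes by substituting the sequence into the formula for $d_k$ in Theorem \ref{dim}. Since only $\gamma_1$ and the coefficients $\gamma_i$ with $i \ge k-m+3$ are nonzero, each of the three sums collapses to its $i=1$ term together with a tail running from $i=k-m+3$ up to the original upper limit ($k-1$ for the first two sums, $k-2$ for the third). Using $s_k = m+k$, the three $i=1$ terms produce $\binom{m+k}{k}$, $\binom{m+k-2}{k}$, and $2\binom{m+k-4}{k-1}$ respectively; in the tails, where $k-i+1 \le m-2$, one substitutes $s_{k-i+1} = 2(k-i+1)$ and, using $s_1=2$ and $s_2=4$, the three arguments $s_{k-i+1}$, $s_{k-i+1}-s_1$, $s_{k-i+1}-s_2$ simplify to $2(k-i+1)$, $2(k-i)$, $2(k-i-1)$. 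Assembling these pieces gives the stated expression for $d_{k,m}$.

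The main obstacle is bookkeeping rather than anything conceptual: one must track the plateau/slope boundary uniformly across all the index ranges, confirm which $\gamma_i$ vanish, and handle the degenerate cases when $k$ is close to $m$ (so that some tail sums are empty). It is also worth verifying that the hypothesis $m \ge 2$ is exactly what is needed for $s_2 = 4$ to hold, so that the third sum takes the displayed form.
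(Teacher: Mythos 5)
Your proposal is correct and follows exactly the paper's route: compute the height sequence $(m,\ldots,m,m-1,\ldots,1)$ attached to $S_{k,m}$ via (\ref{lai}) and substitute it into Theorem \ref{dim}. In fact you supply precisely the ``tedious calculation'' the paper omits (the vanishing of $\gamma_2,\ldots,\gamma_{k-m+2}$, the value $\gamma_{k-m+3}=-1$, and the plateau/slope splitting of the sums), and all of these steps check out; the only slip is the side remark that the tail sums degenerate ``when $k$ is close to $m$'' --- they are empty exactly when $m$ is small ($m\le 3$, resp.\ $m\le 4$), independently of $k$.
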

\begin{proof}
    The sequence (\ref{lambdaSequence}) associated to $S_{k,\,m}$ is $\{m,\, m, \, \ldots, m, \, m-1,\, m-2,\, \ldots,\,1\}$ of length $k$. The desired formula follows from Theorem \ref{dim}, and we omit the tedious calculation.
\end{proof}

\begin{remark}
    Notice that $d_{k,\,k}$ is exactly the Catalan number $c_{k+1}$ by Corollary (\ref{dimspeci1}).
\end{remark}

The intention below is to give another description of the dimensions of $\langle v_S \rangle$ using Proposition \ref{ck}.
\begin{theorem}\label{dim2}
If $S=\{s_1<\dots<s_k\}$ is a $k$-subset of $\n$ and $d_S$ is the
dimension of the module $\langle v_S \rangle$, then
\begin{equation}\label{dS}
    d_S=\sum\limits_{T=\{t_1<t_2<\dots<t_l\}\subseteq S}(-1)^{|S-T|}\det\limits_{1\leq i,j\leq l} \left(  \begin{array}{c}{t_i+1 \choose j-i+1} \\
            \end{array}
    \right)
\end{equation}
\end{theorem}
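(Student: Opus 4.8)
The plan is to reduce Theorem \ref{dim2} to a purely combinatorial inclusion--exclusion identity and then to recognize the alternating sum as a Möbius inversion over the Boolean lattice of subsets of $S$, invoking Proposition \ref{ck} only to interpret the determinants. First I would recall from Lemma \ref{mod1}(1) that $d_S=\dim\langle v_S\rangle$ equals the number of $k$-subsets $T\le S$, i.e. the number of strictly increasing sequences $1\le t_1<\cdots<t_k$ with $t_i\le s_i$. More generally, for any $U=\{u_1<\cdots<u_l\}\subseteq S$ let $d_U$ denote the analogous count of strictly increasing sequences $1\le t_1<\cdots<t_l$ with $t_i\le u_i$ (so $d_U=\dim\langle v_U\rangle$), and set
\[
   W(U)=\#\{(b_1,\dots,b_l)\mid 0\le b_1\le\cdots\le b_l,\ b_i\le u_i\},
\]
the corresponding count with \emph{weak} inequalities. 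Applying Proposition \ref{ck} to the increasing height sequence $u_1\le\cdots\le u_l$ gives $W(U)=\det_{1\le i,j\le l}\binom{u_i+1}{j-i+1}$, which is exactly the determinant appearing in (\ref{dS}). Hence Theorem \ref{dim2} is equivalent to
\[
   d_S=\sum_{U\subseteq S}(-1)^{|S\setminus U|}\,W(U).
\]

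The heart of the argument is the ``zeta'' relation $W(U)=\sum_{R\subseteq U}d_R$, valid for every $U\subseteq S$; Möbius inversion on the Boolean lattice then yields $d_U=\sum_{R\subseteq U}(-1)^{|U\setminus R|}W(R)$, and the case $U=S$ is precisely the displayed identity. To prove the zeta relation I would exhibit a bijection between the sequences counted by $W(U)$ and the pairs $(R,\mathbf t)$ counted by $\sum_{R\subseteq U}d_R$. Given a weakly increasing sequence $(b_1,\dots,b_l)$ bounded by $U$, let $v_1<\cdots<v_m$ be its distinct positive values and let $p_1<\cdots<p_m$ be the positions where these values first occur; send it to the subset $R=\{u_{p_1}<\cdots<u_{p_m}\}\subseteq U$ together with the strictly increasing sequence $(v_1,\dots,v_m)$. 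Since $v_r=b_{p_r}\le u_{p_r}$, this sequence is indeed counted by $d_R$. The inverse map reinflates $(R,\mathbf t)$ into runs: the value $t_r$ fills positions $p_r,\dots,p_{r+1}-1$ (with $t_m$ filling $p_m,\dots,l$) and the value $0$ fills the initial positions before $p_1$; monotonicity of $u_1\le\cdots\le u_l$ guarantees that the reconstructed sequence again satisfies $b_i\le u_i$.

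The main obstacle is verifying that this correspondence is a genuine bijection, in particular checking the boundary behaviour: that the value $0$ is correctly forgotten, so that sequences beginning with zeros map to proper subsets $R\subsetneq U$, and that each constant run is recovered at its correct first-occurrence position, so that the two maps are mutually inverse while both the bound $b_i\le u_i$ and the strictness $t_1<\cdots<t_m$ are preserved. Once this is in place the rest is formal: substituting the evaluation $W(R)=\det_{1\le i,j\le l}\binom{r_i+1}{j-i+1}$ from Proposition \ref{ck} into the Möbius-inverted identity gives (\ref{dS}). As an alternative route I note that one could instead apply Proposition \ref{ck} directly to the single increasing sequence $a_i=s_i-i$, obtaining $d_S=\det_{1\le i,j\le k}\binom{s_i-i+1}{j-i+1}$, and then derive (\ref{dS}) from a determinant identity; but the bijective and Möbius-theoretic route above is cleaner and keeps the combinatorial meaning of every summand transparent.
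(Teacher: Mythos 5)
Your proof is correct and is essentially the paper's argument in different packaging: both identify the determinant $\det\binom{t_i+1}{j-i+1}$, via Proposition \ref{ck}, as the number of weakly increasing sequences bounded by $T$, and both recover $d_S$ (the number of strictly increasing, positive sequences bounded by $S$) through an inclusion--exclusion over the subsets of $S$. The only difference is organizational: you prove the ``zeta'' relation $W(U)=\sum_{R\subseteq U}d_R$ by an exact first-occurrence bijection and then apply M\"obius inversion, whereas the paper runs the inclusion--exclusion directly over the repeat events $p_i=p_{i-1}$ (with the convention $p_0=0$), bijecting each intersection $P_I(S)$ with $P(S\setminus S')$ by deleting the repeated positions---two mutually inverse views of the same decomposition.
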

\begin{proof}

Let $\boldsymbol v$ denote the lattice path corresponding to $S$ in the closed first quadrant with starting point on the $y$-axis. Denote by $P(S)$ the set of all the lattice paths below $\boldsymbol v$, and identify each element of $P(S)$ with its height sequence.
We have
\[
    P(S)=\{p_1\leq\dots\leq p_k \mid 0\le p_i\leq s_i\text{ for } i=1,2,\dots,k\}~.
\]
To find the dimension of $\langle v_S\rangle$, by Lemma \ref{mod1} (i), it suffices to calculate the number of $k$-subsets less than or equal to $S$. Clearly each of these $k$-subsets, when regarded as a sequence, lies in $P(S)$, does not contain $p_1=0$ or repeated elements, but $P(S)$ contains sequences with $p_1=0$ or repeated elements.

Our goal is to distinguish from $P(S)$ all the sequences containing $p_1=0$ or repeated elements.
For any subset $S'=\{s_{i_1}<s_{i_2}<\dots<s_{i_m}\}\subseteq S$, let $I=\{i_1<i_2<\cdots<i_m\}\subseteq\bold k$ be the index set of $S'$, and let $T = S\setminus S'$. Define
\begin{equation*}
    P_I(S) = \{p_1\leq\dots\leq p_k \in P(S)\mid p_i=p_{i-1} \text{ for all } i\in I\},
\end{equation*}
where we agree that $p_{0}=0$ if $1\in I$, and in this case we must have $p_1=0$.
In other words, $P_I(S)$ consists of sequences $p_1\leq\dots\leq p_k$ in $ P(S)$ whose $i$th component $p_i$ is equal to its previous component $p_{i-1}$ for all $i\in I$.

There is a one-to-one correspondence between $P_I(S)$ and $P(T)$. Indeed, let $p_{j_1}\leq p_{j_2}\leq\dots\leq p_{j_l}$ where $l=k-m$ be obtained from $p_1\leq\dots\leq p_k$ by removing $p_i$ for all $i\in I$. Then the map
$f:P_I(S)\rightarrow P(T)$ defined by
\[
    f(p_1\leq\dots\leq p_k) = p_{j_1}\leq p_{j_2}\leq\dots\leq p_{j_l}
\]
is bijective.
Formula (\ref{dS}) follows from Proposition \ref{ck} and
the inclusion-exclusion principle.
\end{proof}

We now describe the dimension of any nonzero submodule of $V$. Proposition \ref{cyclic} (3) assures that the submodule is equal to the module $\langle v \rangle$ generated by some $v\in V$.
\begin{theorem}\label{dimEvery}
Let $v\in V$ and {\rm Red(}v{\rm )}= $\{S_1,\,\ldots,\,S_m\}$. For any $J\subseteq$ {\rm Red(}v{\rm )} denote by $S_J$ the greatest lower bound of $\{S_j\mid j\in J\}$. Then the dimension of $\langle v \rangle$ is given by
\[
    \dim \langle v \rangle = \sum_{\emptyset\,\ne J\,\subseteq {\bold m}} (-1)^{|J|-1} \dim \langle v_{S_J}\rangle.
\]
\end{theorem}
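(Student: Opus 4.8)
The plan is to turn the dimension into a counting problem and evaluate it by inclusion--exclusion. By Corollary \ref{eq} we may replace $v$ by its reduced form $v_{\mathrm{red}}=\sum_{j=1}^m v_{S_j}$, since $\langle v\rangle=\langle v_{\mathrm{red}}\rangle$; note that $\mathrm{supp}(v_{\mathrm{red}})=\mathrm{Red}(v)=\{S_1,\dots,S_m\}$. Applying Proposition \ref{cyclic}(2) to $v_{\mathrm{red}}$ gives $\langle v\rangle=\bigoplus_{T\in\mathcal P}Fv_T$ with $\mathcal P=\bigcup_{j=1}^m\{T\subseteq\n\mid T\le S_j\}$. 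Setting $A_j=\{T\subseteq\n\mid T\le S_j\}$, we therefore have $\dim\langle v\rangle=|A_1\cup\dots\cup A_m|$.

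Next I would apply the inclusion--exclusion principle to the finite sets $A_1,\dots,A_m$, obtaining
\[
    \dim\langle v\rangle=\Big|\bigcup_{j=1}^m A_j\Big|=\sum_{\emptyset\ne J\subseteq\mathbf m}(-1)^{|J|-1}\Big|\bigcap_{j\in J}A_j\Big|.
\]
It then remains to match each intersection term with the prescribed dimension, that is, to prove the identity $\big|\bigcap_{j\in J}A_j\big|=\dim\langle v_{S_J}\rangle$.

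For this I would use Lemma \ref{mod1}(1), by which $A_j$ is exactly the index set of a basis of $\langle v_{S_j}\rangle$; consequently $\bigcap_{j\in J}A_j$ indexes a basis of $\bigcap_{j\in J}\langle v_{S_j}\rangle$, so that $\big|\bigcap_{j\in J}A_j\big|=\dim\bigcap_{j\in J}\langle v_{S_j}\rangle$. Iterating the binary meet identity $\langle v_S\rangle\cap\langle v_T\rangle=\langle v_{S\wedge T}\rangle$ of Lemma \ref{mod1}(3) then collapses this intersection to $\langle v_{S_J}\rangle$, and substituting back into the sum completes the proof.

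The main obstacle is precisely this last identification of the intersection with $\langle v_{S_J}\rangle$. Lemma \ref{mod1}(3) is proved only for two subsets of a common size, so I must first check that $\wedge$ is associative within each fixed-size class---which holds because $S\wedge T$ is the componentwise minimum of the sorted tuples---before legitimately iterating it over an arbitrary index set $J$. I must also handle the degenerate case in which the $S_j$, $j\in J$, do not all have the same cardinality: then no common lower bound exists, $\bigcap_{j\in J}A_j=\emptyset$, and I would adopt the convention $\langle v_{S_J}\rangle=\{0\}$ so that the corresponding term contributes $0$. This is consistent, since subsets of different sizes are incomparable and hence $\langle v_{S_j}\rangle\cap\langle v_{S_{j'}}\rangle=\{0\}$ in that situation.
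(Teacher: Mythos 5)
Your proposal follows essentially the same route as the paper's own proof: reduce $\dim\langle v\rangle$ to the cardinality of $\bigcup_{j}A_j$ with $A_j=\{T\mid T\le S_j\}$ via Proposition~\ref{cyclic}, apply inclusion--exclusion, and identify each intersection $\bigcap_{j\in J}A_j$ with $\langle v_{S_J}\rangle$ through Lemma~\ref{mod1}(3). Your additional care --- checking that the binary meet can be iterated, and handling the degenerate case where the $S_j$, $j\in J$, have different cardinalities (so that $S_J$ does not exist and the term must be read as $0$, consistently with $A_i\cap A_j=\emptyset$) --- merely fills in details that the paper's one-line appeal to Lemma~\ref{mod1}(3) and inclusion--exclusion leaves implicit.
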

\begin{proof}
  From Proposition \ref{cyclic} (2) and (3) the dimension of $\langle v \rangle$ is equal to the cardinality of the set
  $\mathcal{P}(v) = \bigcup_{S\in {\rm Red(}v{\rm )}}\{T\subseteq\n\mid T\le S\}$. Let $A_j=\{T\subseteq\n\mid T\le S_j\}$, $j\in{\bold m}$. Then $\mathcal{P}(v) = \bigcup_{j\in{{\bold m}}}A_j$, and $\dim \langle v_{S_j} \rangle = |A_j|$ by Proposition \ref{cyclic} (2). With Lemma \ref{mod1} (3) in mind and applying the inclusion-exclusion principle to count the cardinality of $\mathcal{P}(v)$, we obtain the desired formula for $\dim \langle v \rangle$.
\end{proof}

\vspace{5mm}
\noindent Jianqiang Feng, ~Wenli Liu, ~Ximei Bai

\vspace {-1mm}
\noindent College of Mathematics and Information Science, Hebei University, Baoding, 071002; Email: vonjacky@126.com, liuwl$\_$hb@163.com, and baixm131@163.com

\vspace{5mm}
\noindent Zhenheng Li

\vspace {-1mm}
\noindent College of Mathematics and Information Science, Hebei University, Baoding, 071002, and Department of Mathematical Sciences, University of South Carolina Aiken, SC 29803; Email: zhenhengl@usca.edu

\end{document}